\documentclass[12pt,reqno,amsart]{article}

\usepackage{amsthm,amsfonts,amssymb,amsmath,oldgerm}
\usepackage{graphicx}
\usepackage{caption}
\usepackage{subcaption}
\usepackage{enumerate}
\usepackage{xcolor}
\usepackage{hyperref}
\usepackage[utf8]{inputenc}
\usepackage[T1]{fontenc}     
\usepackage{newtxtext,newtxmath}
\usepackage{xpatch}
\usepackage{ae}
\usepackage{mdframed}
\usepackage{pstricks}
\usepackage[top=25mm, bottom=25mm, left=25mm, right=25mm]{geometry}
\usepackage{fancyhdr}
\usepackage{float}
\usepackage{numprint}
\usepackage{enumitem}
\usepackage[thinlines]{easybmat}


\setcounter{tocdepth}{4}
\setcounter{secnumdepth}{4}


\cfoot{\thepage}


\definecolor{DarkGreen}{rgb}{0,0.3,0}
\definecolor{LightGreen}{rgb}{0.9,1,0.9}
\definecolor{DarkPurple}{rgb}{0.5,0,0.5}
\definecolor{LightPurple}{rgb}{1,0.85,1}
\definecolor{LightRed}{rgb}{1,0.9,0.9}
\definecolor{Green}{rgb}{0,0.7,0}
\definecolor{Mblue}{rgb}{0,0.6,0.6}




\newtheorem{Th}{Theorem}
\newtheorem{coro}{Corollary}


\newtheorem*{remarks}{Remarks}



\newcommand{\Linfty}{L^{\infty}}

\newcommand{\eps}{\varepsilon}

\newcommand{\modif}{\tilde{f}_h} 
\newcommand{\modapp}{f_{app}} 


\newcommand{\RR}{\mathbb{R}}



\newcommand{\hs}{\hspace{2mm}}

\newcommand{\lnorm}{\left|\left|}
\newcommand{\rnorm}{\right|\right|}

\begin{document}

	\title{\rule{160mm}{0.7mm}\\
		\textbf{Machine Learning Methods for Autonomous Ordinary Differential Equations}\\
		\rule{160mm}{0.5mm}}
	
	\author{Maxime Bouchereau$^1$ \and Philippe Chartier$^{3}$ \and Mohammed Lemou$^{2}$ \and Florian Méhats$^4$}
	
	\date{
		\small
		$^1$Université de Rennes\\%
		$^2$Ravel technologies, on leave from INRIA\\%
		$^3$Ravel technologies, on leave from Centre National de la recherche Scientifique (CNRS)\\
            $^4$Ravel technologies, on leave from Université de Rennes\\
		\hs \\
		\large\today	
	}

	\maketitle
	
	\renewcommand{\abstractname}{\Large Abstract}

	\begin{abstract}
		\hs\\
		Ordinary Differential Equations are generally too complex to be solved analytically. Approximations thereof  can be obtained by general purpose numerical methods. However, even though accurate schemes have been developed, they remain computationally expensive: In this paper, we resort to the theory of modified equations in order to obtain "on the fly"  cheap numerical approximations. The recipe consists in approximating, prior to that, the modified field associated to the modified equation by {\em neural networks}.  Elementary convergence results  are then  established and the efficiency of the technique is demonstrated on experiments.
	\end{abstract}

	\hs

	\textbf{Keywords:} modified equation, ordinary differential equation, neural network, numerical method, convergence analysis.

	
	
	\section{Introduction}
	
	Ordinary Differential Equations are ubiquitous in the  modelling of systems in domains of science as diverse as biology, dynamics, fluid mechanics, quantum mechanics, thermodynamics or weather forecasting. In most situations, these differential equations can not be solved analytically and necessitate the use of  numerical methods so as to compute accurate approximations \cite{GNI,SODE1}.
	
	Generally speaking, the aforementioned methods are the result of a trade-off between accuracy and computational cost. In simple words, a small approximation error requires long computations. Of course, there are various ways in which one may soften the computational constraints, e.g. by raising the order, taking into account the structure of the problem or optimising the coefficients of the method. Excellent numerical methods are abundant in the literature and we refer to reference books \cite{Sanz-Serna,Butcher,Casas,GNI,SODE1,Reich} for them.
	
	In this paper, we choose to derive as much as possible prior information from the knowledge of the vector field of the equation in order to accelerate the solving of the equation. This is thus a two-step process: (i) first, generate data that will be used during the effective computation of the approximate solution; (ii) second, compute, from an initial value, an approximation of the solution "on the fly" as accurately as possible by using available pre-computed values. As such, the process remains too vague to become practical: this is where the crucial ingredient of the technique comes into play, that is to say modified equations. 
	
	The theory of modified equations has emerged in the context of ordinary differential equations as a powerful tool (referred to as {\em backward error analysis}) to explain the excellent behaviour of structure-preserving numerical methods \cite{GNI}. More recently, modified fields have also been used in a dual manner, as a technique to raise the order of any existing numerical method by twisting appropriately the original vector field \cite{Sanz-Serna}. The idea is that, by adding ad-hoc perturbation terms to the vector field and then solving the associated differential equation, one may compute higher-order approximations. It can be indeed proved that, at least at a formal level, for any numerical method, there exists a modified equation whose solution  by the aforementioned method coincides with the exact solution. The perturbation terms to be added may be obtained in analytical form as {\em elementary differentials} involving various derivatives of the original vector field. Complete expansions are for instance available with the help of representations by trees (known as B-series). However, computing the corresponding expressions analytically would be an extremely tedious process and this is the reason why the technique has remained confined to specific situations \cite{chartier2007modified,GNI,MoserVeselov} of limited practical interest. 
	
	The main idea of this work is thus to combine the theory of modified equations with machine learning techniques and more precisely neural networks. Given a differential equation and a numerical method, the ad-hoc perturbations of the vector field are first {\em learnt} by extensive simulations and then approximated by inference from a neural network. Once this representation of the modified vector field has been obtained, it is used to solve the original equation with the same numerical method for any initial value prescribed in a {\em learnt} domain from the phase-space. The combined computational work ids by far greater than for any usual reasonable numerical method. Nevertheless, if one omits the time spend to learn the perturbation, an accurate solution can be obtained very cheaply as compared to well-established schemes such as those of Dormand \& Prince \cite{DOPRI}.
	
	\subsection{Scope  of the paper}
	
	The article is divided into two main sections: Section \ref{section_theory} is devoted to the exposition of the technique and its convergence analysis, while Section \ref{section_Numerical_experiments} presents numerical experiments illustrating the performances and properties of the schemes we analysed.
	
	Subsection \ref{subsection_General_strategy} exposes the general strategy which is adopted. A specific structure of the neural network is selected, in agreement with the structure of the modified field. Moreover, the details of the machine leaning method for the learning of the modified field are given here, concerning in particular the choice of the training data set and the calibration of the parameters of the neural network (approximating the modified field via {\em loss}-minimization).
	
	Subsection \ref{subsection_General_Theory} establishes a convergence result for the  numerical integration resulting from the combined use of neural networks and classical schemes. The essential difference with standard convergence results is reflected in the multiplicative constant of the local error which turns out to be smaller than for a direct application to the original vector field. A special focus is put on explicit Runge-Kutta methods in the same subsection, where more precise estimates are given.
	
	Moreover, two specific schemes are studied in this subsection: the forward Euler method and a Runge-Kutta method of order $2$, which are two simple occurrences of explicit methods. A short convergence analysis is undertaken for each of them, leading to improved bounds. The same methods are then used for numerical experiments which illustrate two main contributions of this work: (i) The modified field can be learnt efficiently through a neural network, as is illustrated in \ref{subsection_Approximation_modified_field}; (ii) The resulting numerical methods have far greater efficiency, as is illustrated on convergence curves in Subsection \ref{subsection_Loss_Integration}. A comparison with the well-known DOPRI5 method  in Subsection \ref{subsection_Time_Computation} is particularly enlightening with this respect. 

	\subsection{Related work}
	
	The link between differential equations and machine learning has been already explored in several publications. Two approaches are prominent. On the one hand, the ODE vector field can be learnt by the technique of  MSE Loss, which can be applied to both ODEs or PDEs  \cite{raissi2018multistep}. Let us notice also a paper of Burton et al. \cite{brunton2016discovering} which proposes symbolic regression for the  learning of complex dynamical systems. On the other hand, the ODE vector field can be learnt by statistical methods. for instance,  Expectation-Maximization is used in a paper of Nguyen et al. \cite{StatFieldLearning}, while a paper of Raissi et al. \cite{raissi2017machineGaussian} proposes Gaussian processes for linear differential equations.
	
	{\bf Links with modified equations.} Links between machine learning and modified equations have been established more recently, e.g. in the paper \cite{FieldLearningModified} where the theory of  modified equation is used fora rigorous analysis. Offen et al. \cite{offen2022symplectic} consider numerical methods for Hamiltonian ODEs. The methods used therein are statistical methods, namely Gaussian processes.
	
	{\bf Structure of Neural Networks.} In order to preserve geometric properties, specific neural networks, adapted to the structure of the differential equations under consideration have been developed. A first example are Hamiltonian equations for which Hamiltonian neural networks have been developed (see \cite{david2021symplectic} or \cite{jin2020sympnets}   where irregular time observed data can be used). A second example are Poisson systems for which Jin et al. \cite{jin2020learningPoisson} developed Poisson neural networks. Another example of specific ODEs is studied in another recent paper \cite{VPNets}, where VPNets are used to learn the volume-preserving flows.
	
	{\bf ODEs methods for Neural Networks.} In the same way as neural networks are used to solve ODEs, the reciprocal strategy can be pursued: neural networks can indeed be modelled by ODEs and their properties deduced from the corresponding ODE properties. For instance, Lu et al. \cite{lu2018beyond} have developed new neural networks which are discretizations of ODEs by various numerical methods and Haber et al. \cite{haber2017stable} have developed new structures of neural networks depending on the stability properties of the ODEs. Finally, Chen et al. \cite{chen2018neural} have derived new optimization methods of the loss function  from the properties of the corresponding ODEs (the neural network is here again obtained through the discretization of the ODE).
	
	{\bf Approximation by neural networks.} In order to properly approximate functions by neural networks, error estimates have been established. Anastassiou \cite{anastassiou2000quantitative} stated rates of convergence for approximations of functions by networks, according to the number of parameters and the dimension. By considering neural network spaces as functional spaces, Gribonval et al. \cite{gribonval2022approximation} have obtained inclusions of theses spaces in Besov or Lebesgue spaces, according to the number of parameters and a given rate of convergence. In a separate work, Bach \cite{bach2021learning} has given bounds on the approximation error in a Hilbertian setting. Finally, a problem of approximation \cite{mallat2019sciences} which is underlined is the curse of dimensionality, where high-dimensional vector fields are approximated with a slower rate of convergence than low-dimensional vector fields, i.e. for high dimensions, more parameters and more data will be required in order to get a satisfying learning.

	\section{Improving the accuracy of numerical methods with machine learning}\label{section_theory}
	
	Consider an autonomous ordinary differential equation of the form 
	
	$$\left\{ \begin{array}{ccl}
	     \dot{y}(t) & = & f(y(t)) \in \RR^d, \quad t \in [0,T] \\
	     y(0) &=& y_0 
	\end{array} \right. ,$$
	
	\noindent where $f:\RR^d \rightarrow \RR^d$ is assumed to be smooth enough. By Cauchy-Lipschitz theorem, we have existence and uniqueness of a solution for any given initial value $y_0 \in \RR^d$. We wish to approximate the solution over $[0,T]$ at times $t_n = nh$, $0 \leqslant n \leqslant N $, where $h = \frac{T}{N}$ is the time-step and $N$ is the number of discretization points.
	
	\subsection{General strategy}\label{subsection_General_strategy}
	
	As explained in the Introduction section,  we shall approximate the modified vector field with the help of a neural network.
	
	\subsubsection{Modified field}

	 Let us consider $\Phi^f_h(\cdot)$ the numerical flow associated to a given numerical method ($h$ is the time-step of the method) and to the vector field $f$, and assume that it is of order $p$, in the sense that\footnote{Here and in the sequel, $|\cdot|$ denotes a norm on $\RR^d$.}
	 
	 \begin{eqnarray}
		 \underset{0 \leqslant n \leqslant N}{Max}\left| \left(\Phi^f_h\right)^n(y_0) - \varphi^f_{nh}(y_0) \right| & \leqslant & Ch^p \label{Sydney_RKE}
	 \end{eqnarray}
	 for some constants $C>0$. If we modify the field $f$ used in $\Phi_h$, i.e. if we apply the numerical flow $\Phi_h$ with the modified field $\tilde f_h$ instead of $f$, we may obtain a higher-order approximation. In fact, the theory of {\em modified equations} states that it is possible to construct $\tilde f_h$ as a series of powers of $h$ multiplied by appropriately chosen functions (at least as a formal series), in such a way that $(\Phi_h^{\tilde f_h})^n(y_0)$ coincides exactly with $y(t_n)$. The structure of this modified field writes  (see \cite{GNI})
	 
	 \begin{eqnarray}
	 	\tilde{f_h}(y) = f(y) + h^p\sum_{j=1}^{+\infty } h^{j-1}f^{[j]}(y) = f(y) + h^p\sum_{j=1}^{k-1} h^{j-1}f^{[j]}(y)  + h^{k+p-1}R(y,h) \label{HK_RKE} 
	 \end{eqnarray}
	 where  the coefficient-functions $f^{[j]}$ are built upon derivatives of $f$. It can be shown rigorously that the truncation of this formal series $(\ref{HK_RKE})$ obtained by neglecting the $\mathcal{O}\left( h^{k+p-1} \right)$-terms, leads to 
	 
$$
(\Phi^{\tilde{f_h}}_h)^n(y_0) = \varphi^f_{nh}(y_0) + \mathcal{O}\left( h^{k+p-1} \right)
$$
where $k$ denotes the number of terms kept in $\tilde f_h$.
	 \subsubsection{Machine learning methods}
	 
	 The main idea of this paper consists in  approximating the modified field $\tilde{f_h}$ by a neural network approximation $f_{app}(\cdot,h)$ whose structure mimics the structure of the theoretical modified field (\ref{HK_RKE}). More precisely, we shall approximate separately each function $f^{[j]}$ in $(\ref{HK_RKE})$ with a neural network. As could be anticipated, the truncation of (\ref{HK_RKE}) will be echoed by a similar truncation 
	 
	 \begin{eqnarray}
	 	f_{app}(y,h) = f(y) + h^p\sum_{j=1}^{N_t-1} h^{j-1}f_j(y) + h^{N_t+p-1}R_{a}(y,h), \label{Rome_RKE}
	 \end{eqnarray}
	 
	 \noindent where $f_i$, for $1 \leqslant i \leqslant N_{t}-1$,  and $R_{a}$, are Multi Layer Perceptrons. An obvious advantage of this choice is that the corresponding numerical method remains consistent. Let us further notice that learning the perturbation in this way is also well-adapted to the situations where the original vector field possesses a specific structure  \cite{david2021symplectic,jin2020learningPoisson,jin2020sympnets,raissi2018multistep,VPNets}.\\
	 
	 The complete numerical procedure can be decomposed into three main steps : Firstly, data are collected by simulating very accurately the exact flow at various points of the domain. A high number of simulations and a high accuracy are prerequisite for a good approximation of the modified field. Secondly, the different neural networks are trained separately by minimising a prescribed {\em Loss}-function. Eventually, given an initial data $y_0$, an approximation of the exact solution is obtained by applying the same numerical scheme as the one used for the training to the field $f_{app}$. In more details, we follow the three stages:
	 \begin{enumerate}[label=\textbf{\arabic*.}]
	 	
	 	\item \textbf{Construction of the data set:} $K$ initial data $y_0^{(k)}$ are randomly selected into a compact set $\Omega \subset \RR^d$ (where we wish to simulate the solution) with uniform distribution. Then, for all $0 \leqslant k \leqslant K-1$, we compute a very accurate approximation of the the exact flow at times $h^{(k)}$ with initial condition $y_0^{(k)}$, denoted $y_1^{(k)}$. Time steps $h^{(k)}$ are selected in the domain $[h_-,h_+]$ (we actually pick up the value $\log h^{(k)}$ randomly in the domain $[\log h_-,\log h_+]$ with uniform distribution).
	 	
	 	\item \textbf{Training of the neural networks:} We minimize the Mean Squared Error (MSE), denoted $Loss_{Train}$, which measures the difference  between predicted data $\hat{y}_1^{(k,\ell)}$ and ``exact data'' $y_1^{(k,\ell)}$ by computing the optimal NN's parameters over $K_0$ data (where $1 \leqslant K_0 \leqslant K-1$) by resorting to a gradient method:
	 	
	 	\begin{eqnarray}
		 	Loss_{Train} & = & \frac{1}{K_0}\sum_{k=0}^{K_0-1}\frac{1}{h^{(k)^{2p+2}}}\Big| \underbrace{\Phi^{f_{app}(\cdot,h^{(k)})}_{h^{(k)}}\big( y_0^{(k)} \big)}_{=\hat{y_1}^{(k)}} - \underbrace{\varphi^f_ {h^{(k)}}\big( y_0^{(k)} \big)}_{=y_1^{(k)}} \Big|^2 
	 	\end{eqnarray}
	 	
	 	At the same time, we compute the value of another MSE, denoted  $Loss_{Test}$,  which measures the difference  between predicted data $\hat{y}_1^{(k,\ell)}$ and ``exact data'' $y_1^{(k,\ell)}$ for a subset of the initial values which have not been used to train the NNs. The objective of this step is to estimate the performance of the training for "unknown" initial values :
	 	
	 	\begin{eqnarray}
	 		Loss_{Test} & = & \frac{1}{K-K_0}\sum_{k=K_0}^{K-1}\frac{1}{h^{(k)^{2p+2}}}\left| \Phi^{f_{app}(\cdot,h^{(k)})}_{h^{(k)}}\big( y_0^{(k)} \big) - \varphi^f_ {h^{(k)}}\big( y_0^{(k)} \big) \right|^2 
	 	\end{eqnarray}
	 	
	 	If $Loss_{Train}$ and $Loss_{Test}$ exhibit the same decay pattern, one considers that there is no overfitting, that is to say that the neural network model does not fit exactly against its training data and remains able to perform accurately against unseen data, which is its main purpose. 
	 	\item \textbf{Numerical approximation:} At the end of the training, an accurate approximation $f_{app}(\cdot,h)$ of $\tilde{f_h}$ is available. It is then used to compute the successive values of $(\Phi^{f_{app}(\cdot,h)}_{h})^n(y_0)$ for $n=0,\ldots,N$. 
	 \end{enumerate}

    \subsection{Error analysis}\label{subsection_General_Theory}

    In this subsection, we analyse the error resulting from the procedure described in previous Subsection.  More specifically, we state estimates of the global error for any standard numerical method.
\begin{Th} \label{th:gen}
 Let us denote $\Phi^{f_{app}(\cdot,h)}_h$, the flow  of a given numerical scheme $\Phi_h$ of order $p$, applied to the modified field $\modapp(\cdot,h)$ and let us consider the global error
\begin{eqnarray}
e_n & := & \left(\Phi^{f_{app}(\cdot,h)}_h\right)^n ( y_0) - \left(\varphi^f_ h\right)^n ( y_0), \label{Paris_RKE}
\end{eqnarray}
at times $t_n = nh$ for $0 \leqslant n \leqslant N$. Denoting the learning error by
\begin{eqnarray}
\delta & := & \underset{(y,h) \in \Omega \times [h_-,h_+]}{\mathrm{Max}}\frac{\left|\modif(y,h)-\modapp(y,h)\right|}{h^p} \label{NYC_RKE}
\end{eqnarray}
 and assuming that
\begin{enumerate}[label=\textbf{(\roman*)}]
\item For any pair smooth vector fields $f_1$ and $f_2$, we have
\begin{eqnarray} \label{ass1}
\forall 0 \leq h \leq h_+, \quad \lnorm \Phi_h^{f_1} - \Phi_h^{f_2} \rnorm_{\Linfty(\Omega)} & \leqslant & Ch \lnorm f_1 - f_2 \rnorm_{\Linfty(\Omega)} \label{Toronto_General}
\end{eqnarray}
for some positive constant $C$, independent of $f_1$ and $f_2$;
\item For any smooth vector field $f$, there exists a constant $L > 0$ such that
\begin{eqnarray} \label{ass2}
\forall 0 \leq h \leq h_+, \, \forall (y_1,y_2) \in \Omega^2, \quad  \left|\Phi_h^{f}(y_1) - \Phi_h^{f}(y_2) \right| & \leqslant & (1+Lh)\left|y_1-y_2\right|. \label{Beijing_General}
\end{eqnarray}
\end{enumerate}
Then there exist two constants $\tilde C,\tilde {L} > 0$ such that: 
\begin{eqnarray}
\underset{0 \leqslant n \leqslant N}{Max}\left|e_n\right| & \leqslant & \frac{C\delta h^p}{\tilde{L}}\left( e^{\tilde{L} T}-1 \right) \label{Berlin_General}
\end{eqnarray}
\end{Th}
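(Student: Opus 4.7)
The plan is to run a standard discrete Gronwall argument, decomposing one step of error into a consistency-type term (controlled by the learning error $\delta$) and a propagation term (controlled by assumption (ii)). The key is that, by definition of the modified field, $\varphi^f_h$ coincides with $\Phi_h^{\tilde f_h}$, so at the level of one step I can compare $\Phi_h^{f_{app}(\cdot,h)}$ with $\Phi_h^{\tilde f_h}$ rather than with the exact flow, and then invoke assumption (i).

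First, I would set $y_n^{app} := (\Phi^{f_{app}(\cdot,h)}_h)^n(y_0)$ and $y_n := (\varphi^f_h)^n(y_0)$, so that $e_n = y_n^{app} - y_n$. Writing
\[
e_{n+1} = \bigl[\Phi_h^{f_{app}(\cdot,h)}(y_n^{app}) - \Phi_h^{\tilde f_h}(y_n^{app})\bigr] + \bigl[\Phi_h^{\tilde f_h}(y_n^{app}) - \Phi_h^{\tilde f_h}(y_n)\bigr],
\]
the first bracket is estimated by assumption (i) applied to the pair of vector fields $f_{app}(\cdot,h)$ and $\tilde f_h$; using the definition \eqref{NYC_RKE} of $\delta$, its norm is at most $Ch\cdot h^p\delta = C\delta h^{p+1}$. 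The second bracket is estimated by assumption (ii) applied to the field $\tilde f_h$, giving $(1+Lh)|e_n|$ for some $L>0$ depending on $\tilde f_h$ (and hence, through the structure \eqref{HK_RKE}, ultimately on $f$).

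Combining these two estimates yields the one-step recursion
\[
|e_{n+1}| \leq (1+Lh)\,|e_n| + C\delta h^{p+1},
\]
with $e_0=0$. A discrete Gronwall lemma then gives
\[
|e_n| \leq \frac{C\delta h^{p+1}}{Lh}\bigl((1+Lh)^n - 1\bigr) = \frac{C\delta h^p}{L}\bigl((1+Lh)^n - 1\bigr),
\]
and using $(1+Lh)^n \leq e^{Lnh} \leq e^{LT}$ for $nh\leq T$ produces the claimed bound \eqref{Berlin_General} with $\tilde L = L$ and $\tilde C = C$.

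The main obstacle I anticipate is not the Gronwall step itself but ensuring the hypotheses remain applicable along the whole trajectory. Assumption (i) is stated on $\Omega$, so one must verify that the iterates $y_n^{app}$ and $y_n$ stay in (a neighborhood of) $\Omega$; this is typically handled by fixing a slightly enlarged compact set and using the smallness of $h$ and $\delta$ to guarantee the iterates do not escape it. A second, more implicit, subtlety is that \eqref{HK_RKE} is a formal series, so $\tilde f_h$ must be understood as the truncation up to order $k$ large enough that the residual $\mathcal{O}(h^{k+p-1})$ in $\Phi_h^{\tilde f_h}(y)-\varphi^f_h(y)$ is absorbed by the $C\delta h^{p+1}$ term (equivalently, $\delta$ already subsumes that modelling error). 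Once these two points are acknowledged, the rest is a routine Gronwall estimate.
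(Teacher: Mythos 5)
Your proof is correct and follows essentially the same route as the paper's: a one-step splitting into a consistency error bounded via assumption (i) and the definition of $\delta$ (using that $\varphi^f_h=\Phi_h^{\tilde f_h}$), a propagation term bounded via assumption (ii), and a discrete Gr\"onwall lemma. The only cosmetic difference is that you evaluate the field-perturbation term at the numerical iterate and apply the stability bound to $\Phi_h^{\tilde f_h}$, whereas the paper evaluates it at the exact point $y(t_n)$ and applies stability to $\Phi_h^{f_{app}(\cdot,h)}$; both yield the stated estimate, and your remarks on keeping the iterates in $\Omega$ and on the truncation of the formal series address points the paper leaves implicit.
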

\begin{proof}
The arguments of the proof are completely standard ans thus omitted.
\end{proof}
\begin{remarks}
\begin{enumerate}[label=\textbf{(\roman*)}]
\item The vector fields $f$ and $\modif$ are smooth, respectively  by assumption and by construction. As for $\modapp(\cdot,h)$, it is smooth as well given that  it is obtained through the composition of affine functions $A_1,\cdots,A_{L+1}$ and nonlinear  functions $\Sigma_1,\cdots,\Sigma_L$ (the so-called  activation functions). The output of the NN thus appears to be of the form $A_{L+1} \circ \Sigma_L \circ A_L \circ \cdots \Sigma_1 \circ A_1$ (for $L$ layers). Hence,  if the activation functions are smooth, then so is $\modapp(\cdot,h)$. This is the case for instance if  the $\Sigma_i$'s are the hyperbolic tangent functions.
\item Assumption (\ref{ass1}) is straightforwardly satisfied for all known consistent methods.
\item A similar error estimate holds for a variable step-size implementation of the numerical method $\Phi$:  if we indeed use the sequence of steps $ 0\leq h_{j} \leq h_+$, then $T = h_0 + \cdots, + h_{N-1}$ and $y_{n+1}^* = \Phi_{h_n}^{\modapp(\cdot,h_n)}\left(y_n^*\right)$, then there exist $\tilde C,\tilde{L}>0$ such that:
\begin{eqnarray*}
\underset{0 \leqslant n \leqslant N}{Max}\left|\Phi^{f_{app}(\cdot,h_{n-1})}_{h_{n-1}} \circ \ldots \circ \Phi^{f_{app}(\cdot,h_0)}_{h_0}  ( y_0) - \varphi^f_{h_{n-1}} \circ \ldots \circ \varphi^f_{h_0} ( y_0)\right| & \leqslant & \frac{\tilde C\delta h^p}{\tilde{L}}\left( e^{\tilde{L} T}-1 \right), \label{Berlin_General}
\end{eqnarray*}
where $h = \underset{0 \leqslant j \leqslant N-1}{\mathrm{Max}}h_j$.
\end{enumerate}
\end{remarks}
We now focus on numerical schemes belonging to the class of explicit Runge-Kutta methods, as this allows to specify some of the constants of previous Theorem. Following Remark (i), we shall assume that $f_{app}(\cdot,h)$ is well-defined and smooth on the compact set $\Omega \times [t_-,t_+]$, so that it is Lipschitz with Lipschitz constant  	
\begin{eqnarray}
\lambda & := & \underset{h \in H}{Max}\lnorm d f_{app}(\cdot,h) \rnorm_{\Linfty(\Omega)}. \label{LA_RKE}
\end{eqnarray}
\begin{coro} Suppose that the numerical scheme $\Phi_h$ from Theorem \ref{th:gen} is the Runge-Kutta method with Butcher tableau $(A,b)$ where $A = (a_{i,j})_{1\leqslant j \leqslant i \leqslant s} \in \mathcal{M}_{s}(\RR)$ and $b = (b_j)_{1 \leqslant j \leqslant s} \in \RR^s$. Assume further that $A$ is strictly lower triangular, so that the scheme is explicit, and that it is of order $p$. Then  inequality (\ref{Berlin_General}) of Theorem \ref{th:gen} holds with 
$\tilde C=\alpha$ and $\tilde L=\alpha \lambda$ where		\begin{eqnarray}	
\alpha & = & \lnorm b \rnorm_{1}\left( 1 + \lambda h_+\lnorm A \rnorm_{\infty}e^{\lambda h_+ \lnorm A \rnorm_{\infty}} \right). \nonumber
\end{eqnarray}	
\end{coro}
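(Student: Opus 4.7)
The plan is to verify the two assumptions of Theorem \ref{th:gen} for an explicit Runge--Kutta scheme with the specific constants $C = \alpha$ and $L = \alpha \lambda$; the conclusion will then follow immediately from the theorem with $\tilde C = C$ and $\tilde L = L$. Recall that the scheme reads $\Phi_h^f(y) = y + h \sum_{j=1}^s b_j K_j(y,h,f)$, where the stages are defined by $K_i(y,h,f) = f(y + h \sum_{j=1}^{i-1} a_{i,j} K_j(y,h,f))$ for $i = 1, \ldots, s$; the strict lower triangularity of $A$ ensures that each $K_i$ is given explicitly by forward substitution.

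To establish assumption (i), I would consider two smooth vector fields $f_1, f_2$ and their stages $K_i^{(1)}, K_i^{(2)}$ at a common point $y$, and start from $\Phi_h^{f_1}(y) - \Phi_h^{f_2}(y) = h \sum_j b_j (K_j^{(1)} - K_j^{(2)})$, so that the problem reduces to bounding $\max_i |K_i^{(1)} - K_i^{(2)}|$. Using $K_i^{(1)} - K_i^{(2)} = [f_1 - f_2](Y_i^{(1)}) + [f_2(Y_i^{(1)}) - f_2(Y_i^{(2)})]$ with $Y_i^{(\ell)} = y + h \sum_{j<i} a_{i,j} K_j^{(\ell)}$, and invoking the Lipschitz constant $\lambda$ of (\ref{LA_RKE}) applied to $f_2$, one obtains the discrete Volterra-type recurrence
\begin{equation*}
|K_i^{(1)} - K_i^{(2)}| \leqslant \lnorm f_1 - f_2 \rnorm_{\Linfty(\Omega)} + \lambda h \sum_{j=1}^{i-1} |a_{i,j}| \, |K_j^{(1)} - K_j^{(2)}|.
\end{equation*}
Iterating from $i=1$ up to $i=s$ and applying a discrete Gronwall-type bound exploiting the nilpotency of $A$ should yield $\max_i |K_i^{(1)} - K_i^{(2)}| \leqslant (1 + \lambda h \lnorm A \rnorm_{\infty} e^{\lambda h \lnorm A \rnorm_{\infty}}) \lnorm f_1 - f_2 \rnorm_{\Linfty(\Omega)}$. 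Multiplying by $h \lnorm b \rnorm_{1}$ and using $h \leqslant h_+$ then delivers assumption (i) with $C = \alpha$.

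Assumption (ii) is verified by an entirely parallel calculation applied to a single Lipschitz field $f$ and two initial data $y_1, y_2$. The analogous recurrence $|K_i(y_1) - K_i(y_2)| \leqslant \lambda |y_1 - y_2| + \lambda h \sum_{j<i} |a_{i,j}| |K_j(y_1) - K_j(y_2)|$ should yield, by the same Gronwall argument, $\max_i |K_i(y_1) - K_i(y_2)| \leqslant \lambda|y_1-y_2|(1 + \lambda h \lnorm A \rnorm_{\infty} e^{\lambda h \lnorm A \rnorm_{\infty}})$. Writing $\Phi_h^f(y_1) - \Phi_h^f(y_2) = (y_1-y_2) + h\sum_j b_j (K_j(y_1) - K_j(y_2))$ then gives $|\Phi_h^f(y_1) - \Phi_h^f(y_2)| \leqslant (1 + h \alpha \lambda)|y_1-y_2|$, i.e.\ assumption (ii) with $L = \alpha \lambda$. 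A direct application of Theorem \ref{th:gen} then produces the claimed estimate.

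The hardest part will be establishing the sharp form of the stage bound, specifically the factor $1+\lambda h \lnorm A \rnorm_{\infty} e^{\lambda h \lnorm A \rnorm_{\infty}}$. A naive unrolling of the Volterra recurrence, using $\sum_j |a_{i,j}| \leqslant \lnorm A \rnorm_{\infty}$ and induction on the stage index, produces the finite geometric sum $\lnorm f_1 - f_2 \rnorm_{\Linfty(\Omega)} \sum_{k=0}^{s-1}(\lambda h \lnorm A \rnorm_{\infty})^k$, so the critical step is to repackage this sum in the cleaner exponential form stated in the corollary. This should be achieved through a discrete Gronwall inequality tailored to the strictly lower-triangular (nilpotent) structure of $A$, leveraging the fact that the stage iteration terminates in exactly $s$ steps and treating the sum as a discrete analogue of the integral solution of a linear Volterra equation.
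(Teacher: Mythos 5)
The paper offers no proof of this corollary (only Theorem \ref{th:gen} is proved, in the appendix), so there is nothing of the authors' to compare yours against; your overall strategy --- verify assumptions (i) and (ii) of Theorem \ref{th:gen} for the explicit Runge--Kutta map by estimating the stage differences, then invoke the theorem with $\tilde C=\alpha$, $\tilde L=\alpha\lambda$ --- is surely the intended route, and your reduction to the recurrence
\begin{equation*}
\Delta_i \leqslant D + \lambda h \sum_{j=1}^{i-1}|a_{i,j}|\,\Delta_j, \qquad \Delta_i := \bigl|K_i^{(1)}-K_i^{(2)}\bigr|, \quad D := \lVert f_1-f_2\rVert_{L^{\infty}(\Omega)},
\end{equation*}
is correct (modulo the caveat, already implicit in the paper, that the internal stages may leave $\Omega$, so the $L^{\infty}(\Omega)$ norms require a slightly enlarged domain).

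The gap sits exactly at the step you defer to your final paragraph, and it cannot be closed as you propose. Set $x=\lambda h\lVert A\rVert_{\infty}$. Unrolling the recurrence gives $\max_i\Delta_i\leqslant D\sum_{k=0}^{s-1}x^k$, and this is sharp: for the tableau with $a_{i,i-1}=a$, all other entries zero, and $b=(0,\dots,0,1)$, taking $f_2$ linear with slope $\lambda$ and $f_1-f_2$ a positive constant turns every inequality into an equality. But $\sum_{k=0}^{s-1}x^k\leqslant 1+xe^{x}$ is false for $s\geqslant 4$: the left-hand side has $x^3$-coefficient $1$ while $1+xe^{x}=1+x+x^2+\tfrac{x^3}{2}+\cdots$ has $\tfrac12$, so the geometric sum exceeds $1+xe^{x}$ for all small $x>0$ (and for $0<x<1$ it tends to $1/(1-x)>1+xe^{x}$ as $s\to\infty$). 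Nilpotency of $A$ only guarantees that the sum terminates, not that the chain sums decay factorially --- for the subdiagonal tableau above one has $\lVert\,|A|^m\rVert_{\infty}=\lVert A\rVert_{\infty}^{m}$ exactly --- so no discrete Gronwall lemma will repackage the geometric sum into the stated constant. What you can prove cleanly is either assumption (i) with $C=\lVert b\rVert_{1}\sum_{k=0}^{s-1}(\lambda h_+\lVert A\rVert_{\infty})^{k}$, or, if you want the exponential shape, a two-pass argument: first $\Delta_i\leqslant D\,(1+\lambda h\max_{i,j}|a_{i,j}|)^{i-1}\leqslant D\,e^{(s-1)\lambda h\max_{i,j}|a_{i,j}|}$, then one further application of the recurrence to get $\Delta_i\leqslant D\bigl(1+\lambda h\lVert A\rVert_{\infty}\,e^{(s-1)\lambda h\max_{i,j}|a_{i,j}|}\bigr)$, whose exponent dominates $\lambda h\lVert A\rVert_{\infty}$ and is therefore weaker than the claim. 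The discrepancy with the stated $\alpha$ is only of order $(\lambda h_+\lVert A\rVert_{\infty})^{3}$, so nothing downstream is affected, but as written your proof cannot terminate; you should adopt one of the two provable constants above (and note that the corollary's $\alpha$ itself appears to need the corresponding correction). Your treatment of assumption (ii) and the final identification $L=\alpha\lambda$ are fine once the stage bound is settled.
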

\begin{remarks}
\begin{enumerate}[label=\textbf{\arabic*.}]
\item As the approximation $f_{app}(\cdot,h)$ of $\tilde{f_h}$ contains an ${\cal O}(h^p)$-error term, the order of the new numerical procedure coincides with the order of the underlying scheme $\Phi_h$. However, as soon as the NN  becomes large, $\delta$ is small enough for the combined procedure to be significantly more accurate then the simple application of $\Phi_h$. 
\item For $N_t=1$, with $1$ hidden layer, we have density of MLP's in $\mathcal{C}^1(\Omega)$ for the Sobolev norm $W^{1,\infty}$ \cite{jin2020sympnets}. If we have $\underset{h \in H}{\mathrm{Max}}\lnorm  \frac{\left|\modif(\cdot)-\modapp(\cdot,h)\right|}{h^p} \rnorm_{W^{1,\infty}(\Omega)} \leqslant \delta$, then we get
\begin{eqnarray}
\lambda & \leqslant & \underset{h \in H}{Max}\lnorm d\tilde{f_h} \rnorm_{\Linfty(\Omega)} + \delta h_+^p.
\end{eqnarray}
\item Error estimates for the Forward Euler and the sol-called RK2 methods may be slightly improved. One has indeed 
\begin{eqnarray*}
\underset{0 \leqslant n \leqslant N}{Max}\left|e_n^*\right|  \leqslant  \frac{\delta h}{\lambda}\left( e^{\lambda T}-1 \right) 
\quad \mbox{ and } \quad 
\underset{0 \leqslant n \leqslant N}{Max}\left|e_n^*\right|  \leqslant  \frac{\delta h^2}{\lambda}\left( e^{\lambda\left(1+\frac{\lambda h_+}{2}\right) T}-1 \right),
\end{eqnarray*}
for respectively Forward Euler  and RK2 methods.
\end{enumerate}
\end{remarks}

        \subsection{An alternative method for parallel training}

        In this subsection, we show how to learn  the modified field in an alternative way. The main idea  consists in training separately each term (say for instance of the modified field for the forward Euler method), by creating different data sets for different time steps $h_1 < \cdots < h_{N_h}$:
\begin{eqnarray}
 y_j & = & y_0 + h_jf(y_0) + h_j^2f^{[1]}(y_0) + \cdots + h_j^{N_t}f^{[N_t-1]}(y_0) + h_j^{N_t+1}R(y_0,h_j).
\end{eqnarray}
Note that in contrast with  previous method, the step-size is not chosen at random for each initial value. We then obtain a linear system which can be solved by using the {\em generalized inverse} of a matrix. The solution of this linear system encompasses the values of $f^{[1]}(y_0)$, ..., $f^{[N_t-1]}(y_0)$ and $R(y_0,h_1)$, ...,   $R(y_0,h_{N_h})$ which correspond to data usable for learning each term of the modified field separately.\\

The main advantages of this method are a shorter training-time  (at least on a parallel machine), thus allowing for a larger number of data,  and a smaller computational time  (again on a parallel machine) when it comes to obtaining the numerical solution from trained values.

	\section{Numerical experiments}\label{section_Numerical_experiments}
	
	In order to illustrate our theoretical results, we have tested the method given in Section \ref{subsection_General_strategy} for two simple dynamical systems used from simple physics:
	
	\begin{enumerate}[label=\textbf{\arabic*.}]
		\item \textbf{The Non-linear Pendulum:} This system describes the movement of a pendulum under the influence of gravity. It is governed by the equations
		
		$$\left\{\begin{array}{c c c}
		    \dot{y_1} & = & -\sin(y_2) \\
		    \dot{y_2} & = & y_1
		\end{array}\right. ,$$
		
		where $y_2$ denotes the angle of the pendulum with respect to the vertical and $y_1$ its angular velocity. Note that the system is Hamiltonian, see  \cite{david2021symplectic,GNI,jin2020sympnets}. Parameters are given in Appendix \ref{Parameters_Pendulum_Euler} for the forward Euler method, Appendix \ref{Parameters_Pendulum_RK2} for the Runge-Kutta 2 method and Appendix \ref{Parameters_Pendulum_midpoint} for the midpoint rule.
		
		\item \textbf{The Rigid Body system:} This is a three-dimensional system which describes the angular rotation of a solid in the physical space
		
		$$\left\{\begin{array}{c c c}
		\dot{y_1} & = & \left(\frac{1}{I_3} - \frac{1}{I_2}\right)y_2y_3 \\
		\dot{y_2} & = & \left(\frac{1}{I_1} - \frac{1}{I_3}\right)y_1y_3 \\
		\dot{y_3} & = & \left(\frac{1}{I_2} - \frac{1}{I_1}\right)y_1y_2
		\end{array}\right. ,$$
		
		where $y_1$, $y_2$ and $y_3$ denote the angular momenta, and $I_1$, $I_2$ and $I_3$ the momenta of inertia \cite{GNI} (we take here $I_1=1$, $I_2=2$, $I_3=3$). It possesses two invariants, the so-called Casimir $C(y) = \frac{1}{2}|y|^2$ and the energy $H(y) = \frac{1}{2}\left( \frac{y_1^2}{I_1}  + \frac{y_2^2}{I_2} + \frac{y_3^2}{I_3}\right)$. Hence, the solution lies at the intersection of the sphere $|y|^2 = |y(0)|^2$ and of the ellipsoïd $\frac{y_1^2}{I_1}  + \frac{y_2^2}{I_2} + \frac{y_3^2}{I_3} = 2 H(y(0))$. The domain $\Omega$ used for training is thus chosen accordingly. Parameters are given in Appendix \ref{Parameters_Rigid_Body_Euler}.
		
	\end{enumerate}
    
    For the {\em Forward Euler} and {\em RK2} methods, the modified field $(\ref{HK_RKE})$ can be computed by recursive formulae based on various derivatives of $f$, see for instance  \cite{chartier2007modified, GNI}. It is represented by a series whose general term $f^{[j]}$ has an explicit -though complicated- expression, which can be compared with its numerical counterpart, obtained by learning it from the data set. For all $y \in \RR^d$, $1 \leqslant j \leqslant k-1$, we have on the one hand
		\begin{eqnarray}
			f^{[1]}(y) & = & \frac{1}{2}df(y)f(y) \label{Rome_Euler} \\
			f^{[j]}(y) & = & \frac{1}{j+1}df^{[j-1]}(y)f(y) \label{Rome_Euler_bis}
		\end{eqnarray}
for the Forward Euler method,	and on the other hand
		\begin{eqnarray}
			f^{[1]}(y) & = & \frac{1}{24}d(df\cdot f)(y)f(y) + \frac{1}{8}df(y)^2f(y) \label{Rome_RK} \\
			f^{[2]}(y) & = & \frac{1}{24}d\left(d\left(df\cdot f\right)\cdot f\right)f(y) - \frac{1}{2}df(y)f^{[1]}(y) - \frac{1}{2}df^{[1]}(y)f(y).  \label{Rome_RK2_bis}
		\end{eqnarray}
for the Runge-Kutta 2 method. Formulas associated to the midpoint method are given in \cite{chartier2007modified}. 
Truncating the formal power series $(\ref{HK_RKE})$ then gives an approximation of the theoretical modified field, which serves as a reference
	\begin{eqnarray}
	 	\tilde{f_h}(y) = f(y) + h^p\sum_{j=1}^{k-1} h^{j-1}f^{[j]}(y)  + \mathcal{O}\left( h^{k+p-1} \right). \label{Approx_Modif_Field}
	 \end{eqnarray}
Here,  $p$ is the order of the numerical method under consideration.
 
	\subsection{Approximation of the modified field}\label{subsection_Approximation_modified_field}
	
	In this subsection, we study the approximation error between the learned modified field $(\ref{Rome_RKE})$ and the theoretical modified field $(\ref{HK_RKE})$ for the nonlinear Pendulum. We observe the learning error w.r.t. both space and time step variables. More precisely, we plot 
	the function
	\begin{eqnarray}
		g_h^k :x & \mapsto & \frac{1}{h^p}\left| \tilde{f_h}^k(x) - f_{app}(x,h) \right|
	\end{eqnarray}
	for $k=4$ over the domain $\Omega = [-2,2]^2$ in order to study the learning error in space, where the ${\cal O}$-term in $\tilde{f_h}(y)$ is simply neglected. We furthermore represent $\underset{\Omega}{\mathrm{Max}}g_h^k$ for several values of time steps $h$, in order to study the learning error in function of the the time step. Note that we clearly get  the expected order of convergence of $f_{app}(\cdot,h)$ towards the modified field $\tilde{f_h}$, with the exception of a plateau for small values of $h$ .\\
	
	Figures \ref{fig_learning_error_pendulum_Euler},\ref{fig_learning_error_pendulum_RK2} and \ref{fig_learning_error_pendulum_midpoint} show that the error $g_h^4$ is globally constant at the center of the domain and grows near its boundaries (see  \cite{david2021symplectic} where a similar behaviour is observed). 
	
	Altogether, these experiments confirm that the modified field can be appropriately learned with our neural network.

	\begin{figure}[H]
		
		\begin{minipage}{1.1\linewidth}
			\centering
			\begin{minipage}{0.45\linewidth}
				\begin{figure}[H]
					\includegraphics[width=\linewidth]{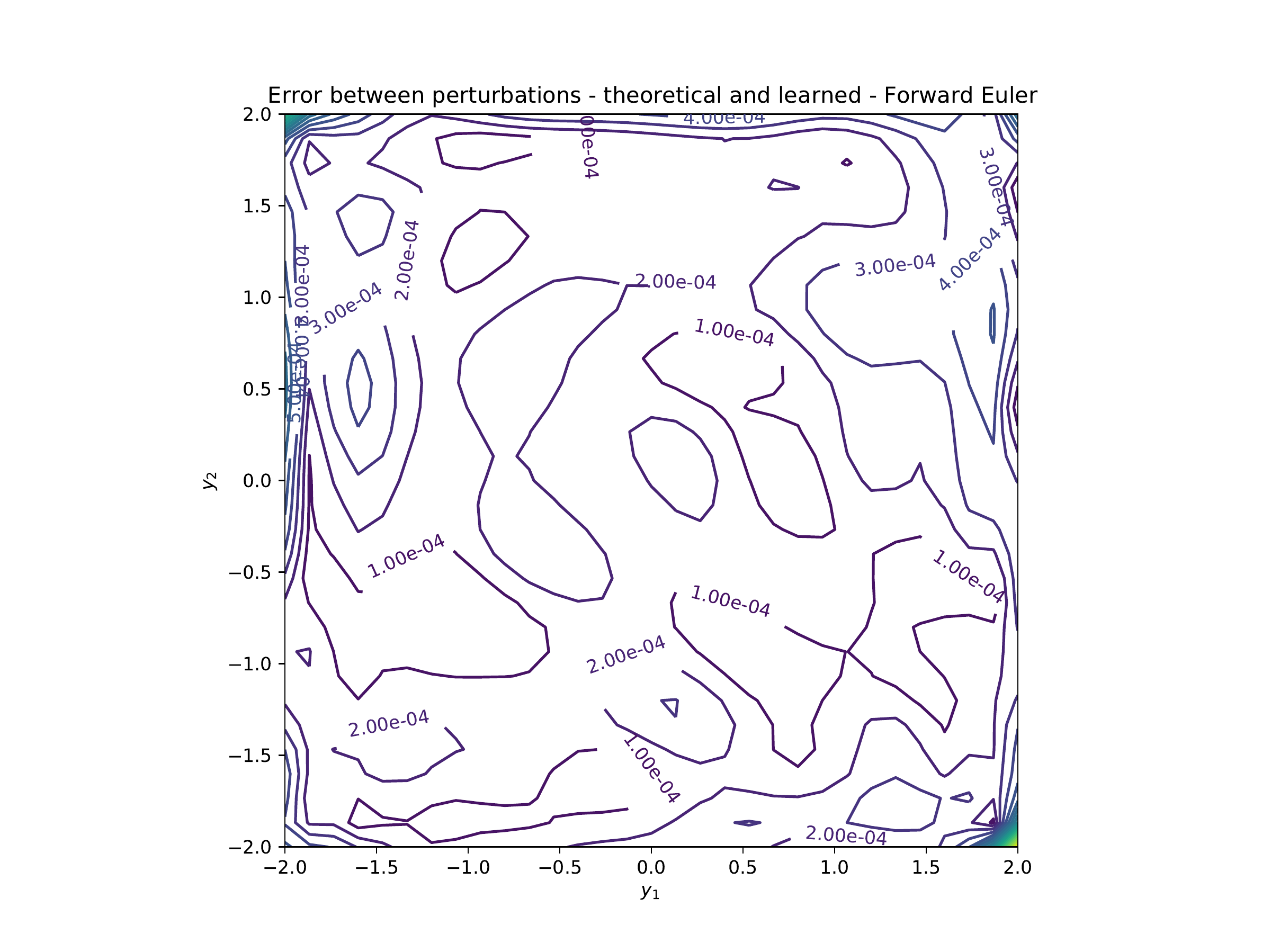}
				\end{figure}
			\end{minipage}
			\begin{minipage}{0.45\linewidth}
				\begin{figure}[H]
					\includegraphics[width=\linewidth]{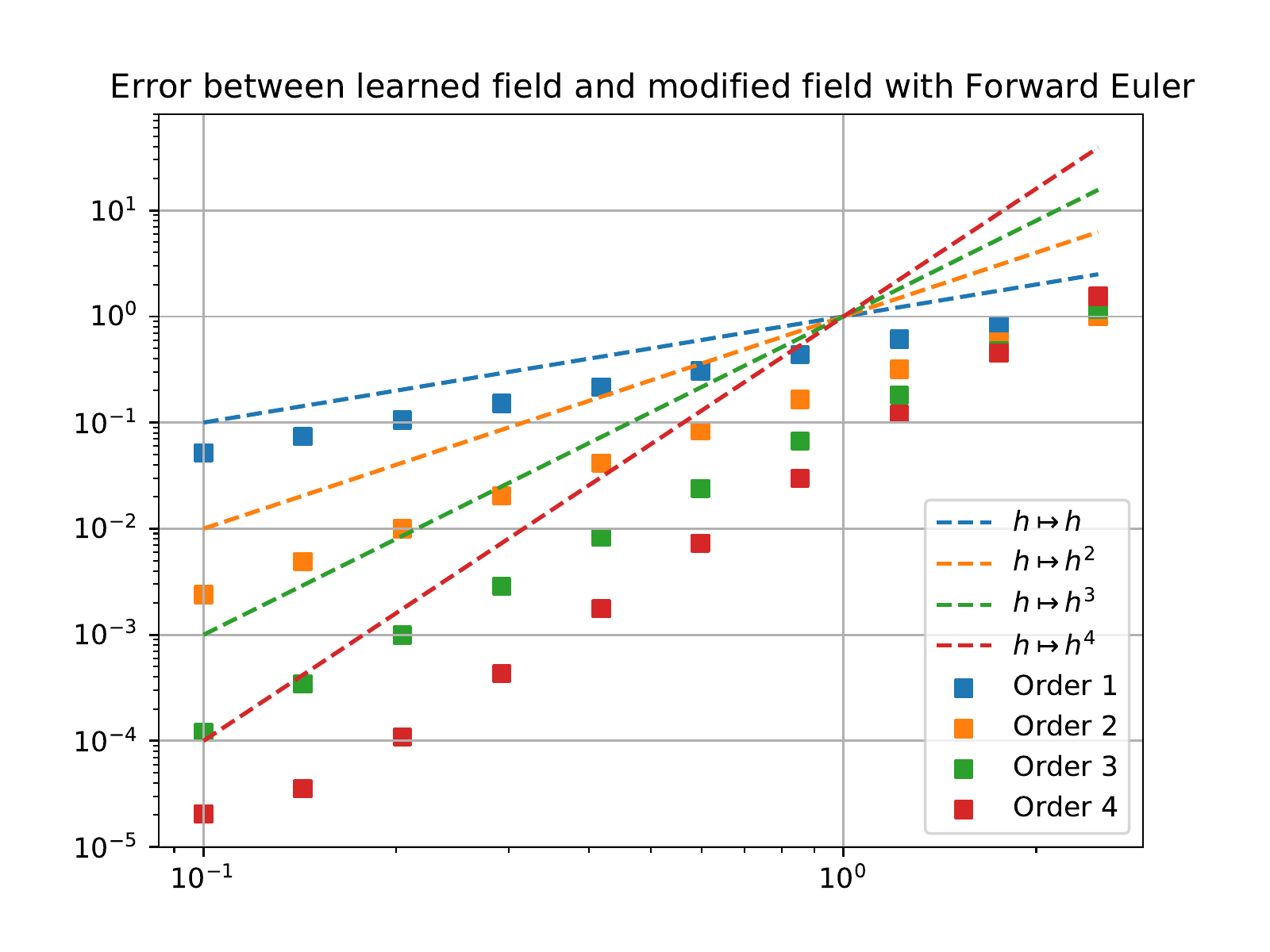}
				\end{figure}
			\end{minipage}
			
		\end{minipage}
		\caption{Forward Euler method. Left: Difference between $\tilde{f_h}^4$ and $f_{app}(\cdot,h)$ for $h=0.1$. Right: Error between $\tilde{f_h}^k$ and $f_{app}(\cdot,h)$ for $1 \leqslant k \leqslant 4$. }
		\label{fig_learning_error_pendulum_Euler}
	\end{figure}
	
	\begin{figure}[H]
		
		\begin{minipage}{1.1\linewidth}
			\centering
			\begin{minipage}{0.45\linewidth}
				\begin{figure}[H]
					\includegraphics[width=\linewidth]{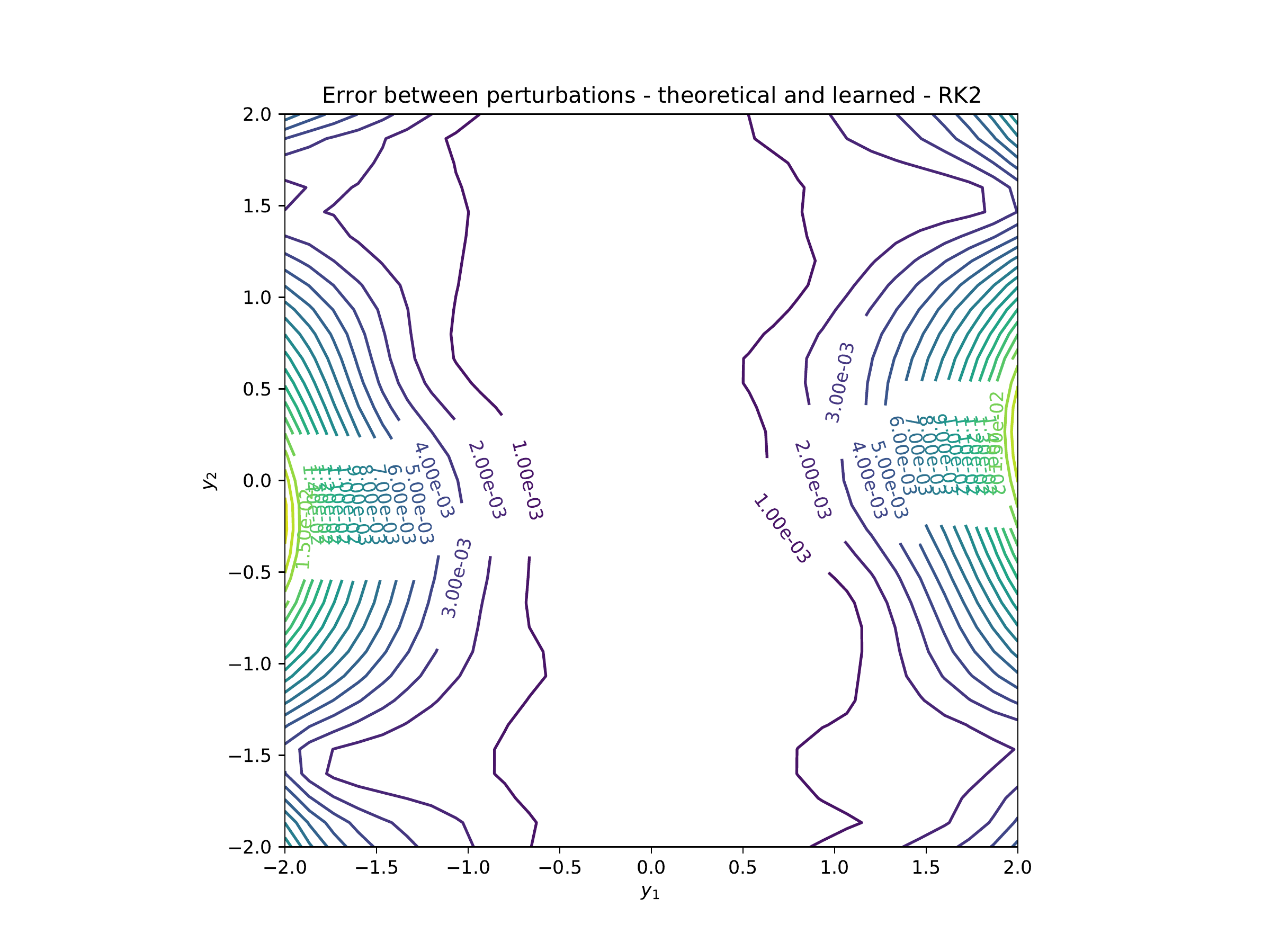}
				\end{figure}
			\end{minipage}
			\begin{minipage}{0.45\linewidth}
				\begin{figure}[H]
					\includegraphics[width=\linewidth]{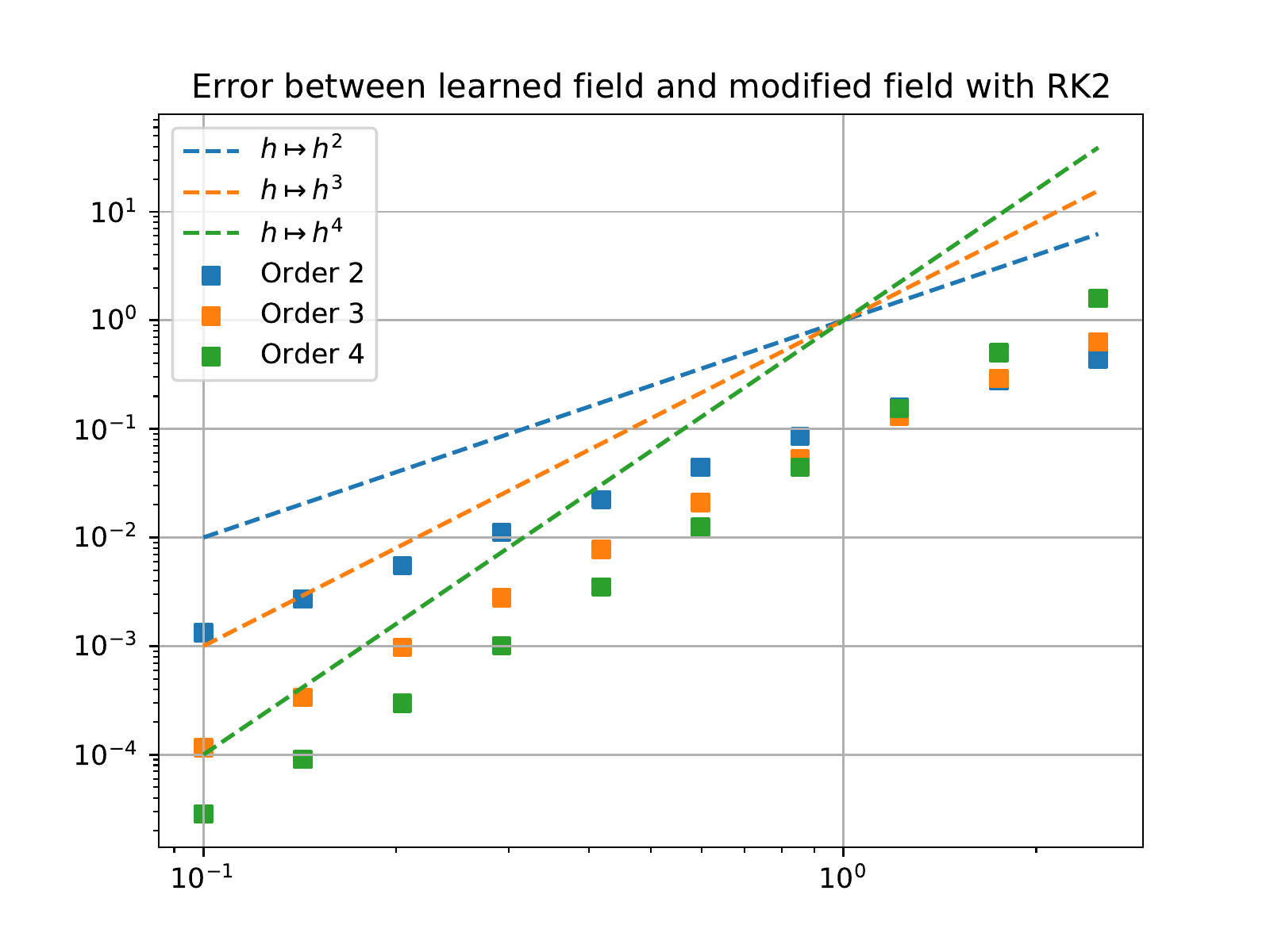}
				\end{figure}
			\end{minipage}
			
		\end{minipage}
		\caption{Runge-Kutta 2 method. Left: Difference between $\tilde{f_h}^4$  and $f_{app}(\cdot,h)$ for $h=0.1$. Right: Error between $\tilde{f_h}^k$ and $f_{app}(\cdot,h)$ for $1 \leqslant k \leqslant 4$. }
		\label{fig_learning_error_pendulum_RK2}
	\end{figure}
	
	\begin{figure}[H]
		
		\begin{minipage}{1.1\linewidth}
			\centering
			\begin{minipage}{0.45\linewidth}
				\begin{figure}[H]
					\includegraphics[width=\linewidth]{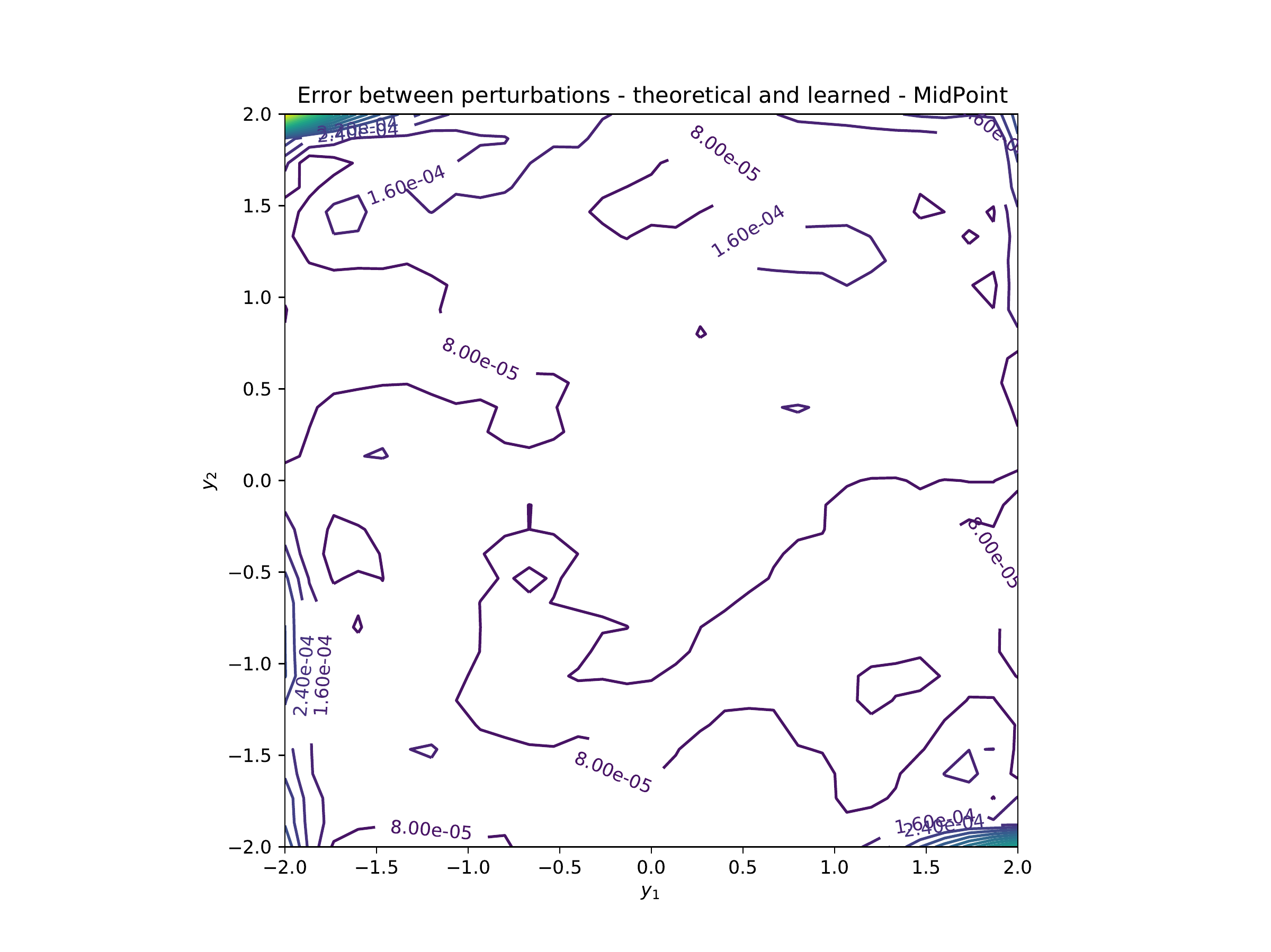}
				\end{figure}
			\end{minipage}
			\begin{minipage}{0.45\linewidth}
				\begin{figure}[H]
					\includegraphics[width=\linewidth]{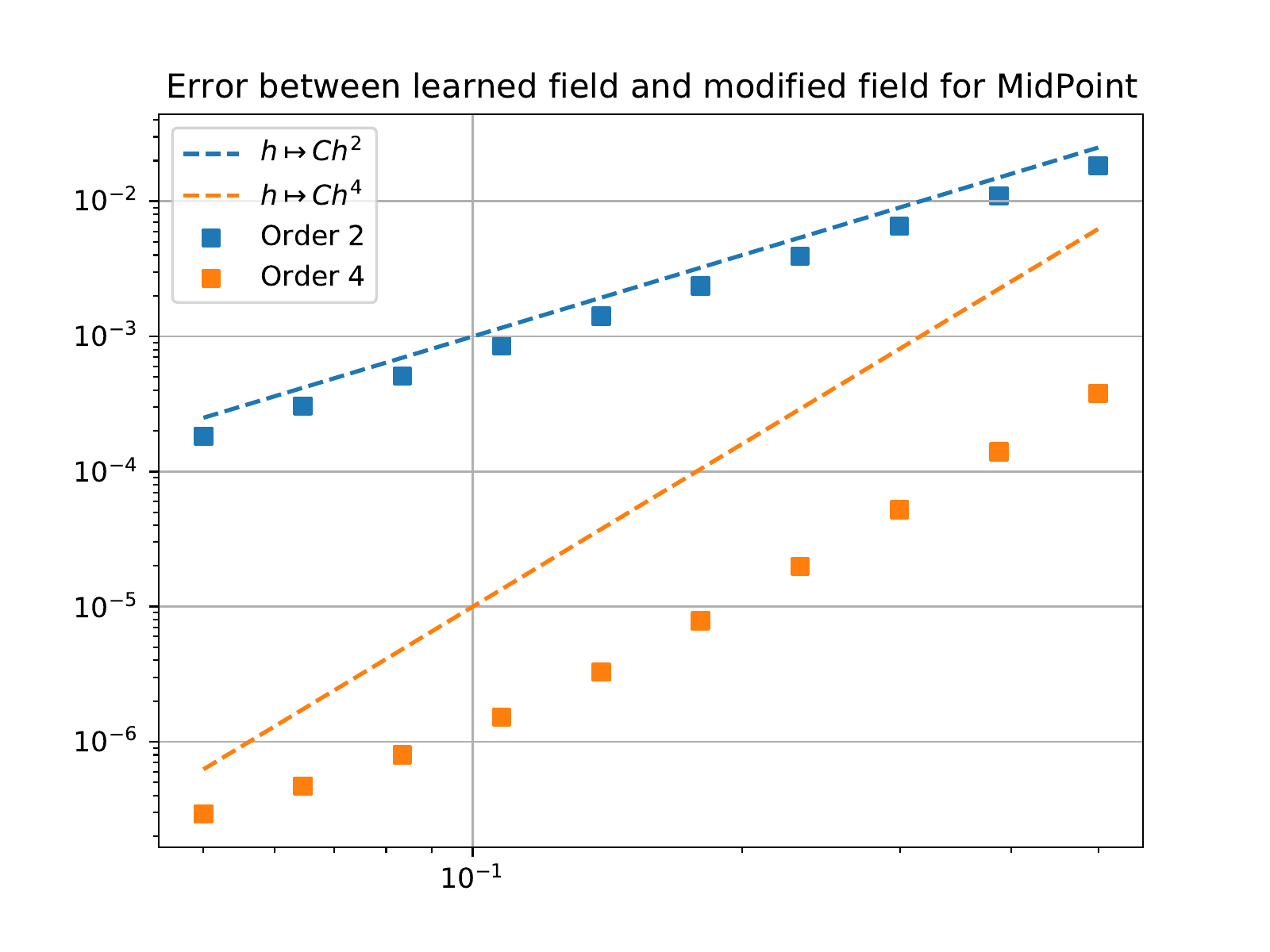}
				\end{figure}
			\end{minipage}
			
		\end{minipage}
		\caption{Midpoint method. Left: Difference between $\tilde{f_h}^4$ and $f_{app}(\cdot,h)$ for $h=0.05$. Right: Error between $\tilde{f_h}^k$ and $f_{app}(\cdot,h)$ for $1 \leqslant k \leqslant 4$. Note that owing to the structure of the modified field for midpoint method (see \cite{chartier2007modified,GNI}), terms for odd powers of $h$ vanish.}
		\label{fig_learning_error_pendulum_midpoint}
	\end{figure}

	\subsection{Loss decay and Integration of ODE's}\label{subsection_Loss_Integration}
	
	Now, in order to compare, for a given method,  the integration of a dynamical system  with the original field and with the learned modified field, we will solve the nonlinear Pendulum with the Forward Euler, Runge-Kutta 2 and midpoint methods and the Rigid Body system with the Forward Euler method.
	However, prior to that, we study the decays of the $Loss$-functions  for the training and testing data sets  ($Loss_{Train}$ and $Loss_{Test}$). Their similarity is a good indication that there is no overfitting (the size of the training data set is thus appropriately estimated). As the MSE $Loss$ is used, it gives an idea of the value of the square of the learning error.\\

    Figures \ref{fig_trajectories_pendulum_Euler}, \ref{fig_trajectories_pendulum_RK2} and \ref{fig_trajectories_rigid_body} show a more accurate numerical integration by using the corresponding learned modified field $f_{app}(\cdot,h)$ than using $f$. Moreover, exact flow and numerical flow with $f_{app}(\cdot,h)$ seem identical due to the small numerical error.

	 \begin{figure}[H]
            \centering
            \includegraphics[scale = 0.6]{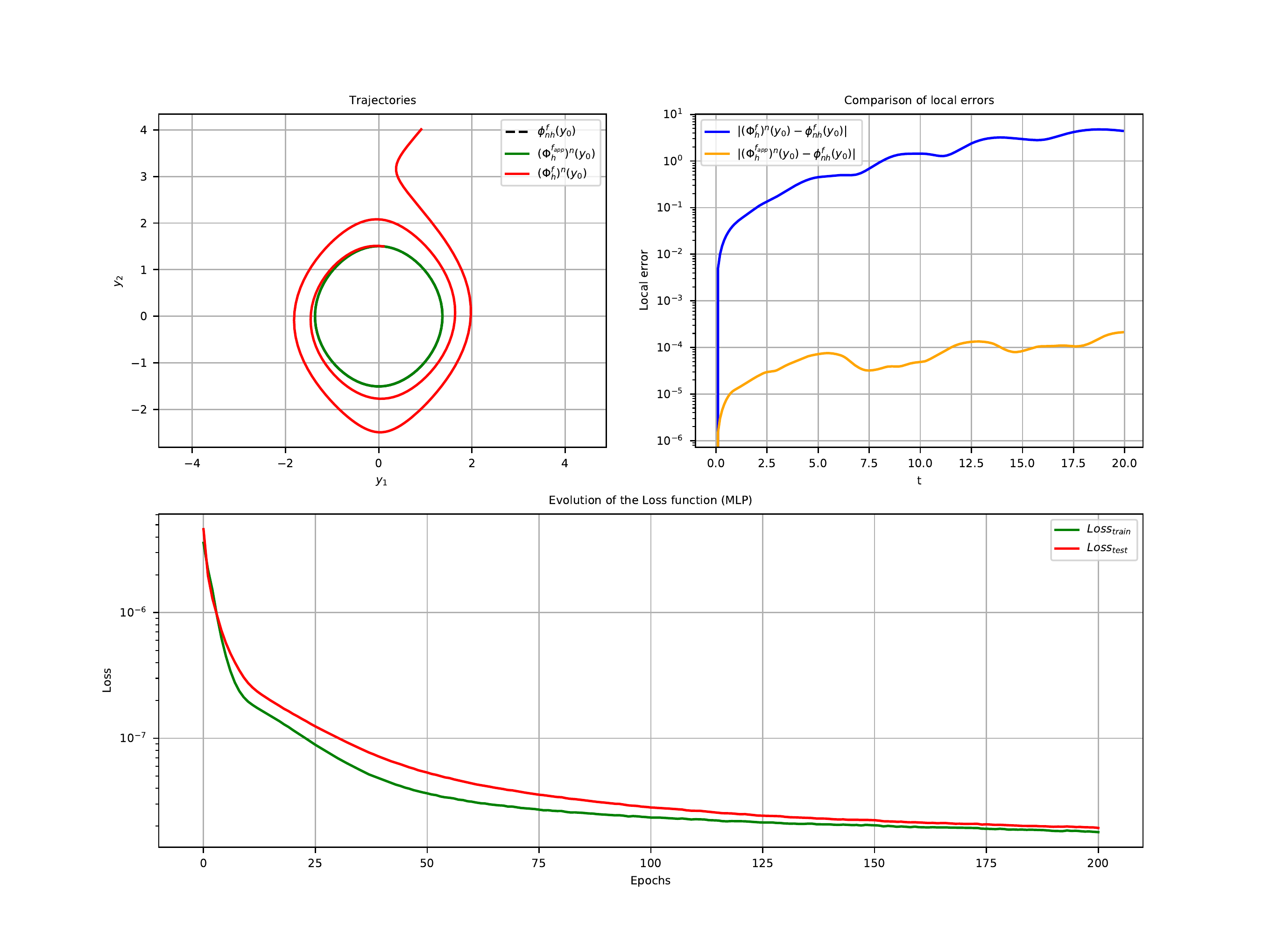}
            \caption{Comparison between $Loss$ decays (green: $Loss_{Train}$, red: $Loss_{Test}$), trajectories (dashed dark: exact flow, red: numerical flow with $f$, green: numerical flow with $f_{app}(\cdot,h)$) and local error (blue: exact flow and numerical flow with $f$, yellow: exact and numerical flow with $f_{app}(\cdot,h)$ ) for the nonlinear pendulum with Forward Euler method.}
            \label{fig_trajectories_pendulum_Euler}
        \end{figure}

        \begin{figure}[H]
            \centering
            \includegraphics[scale = 0.6]{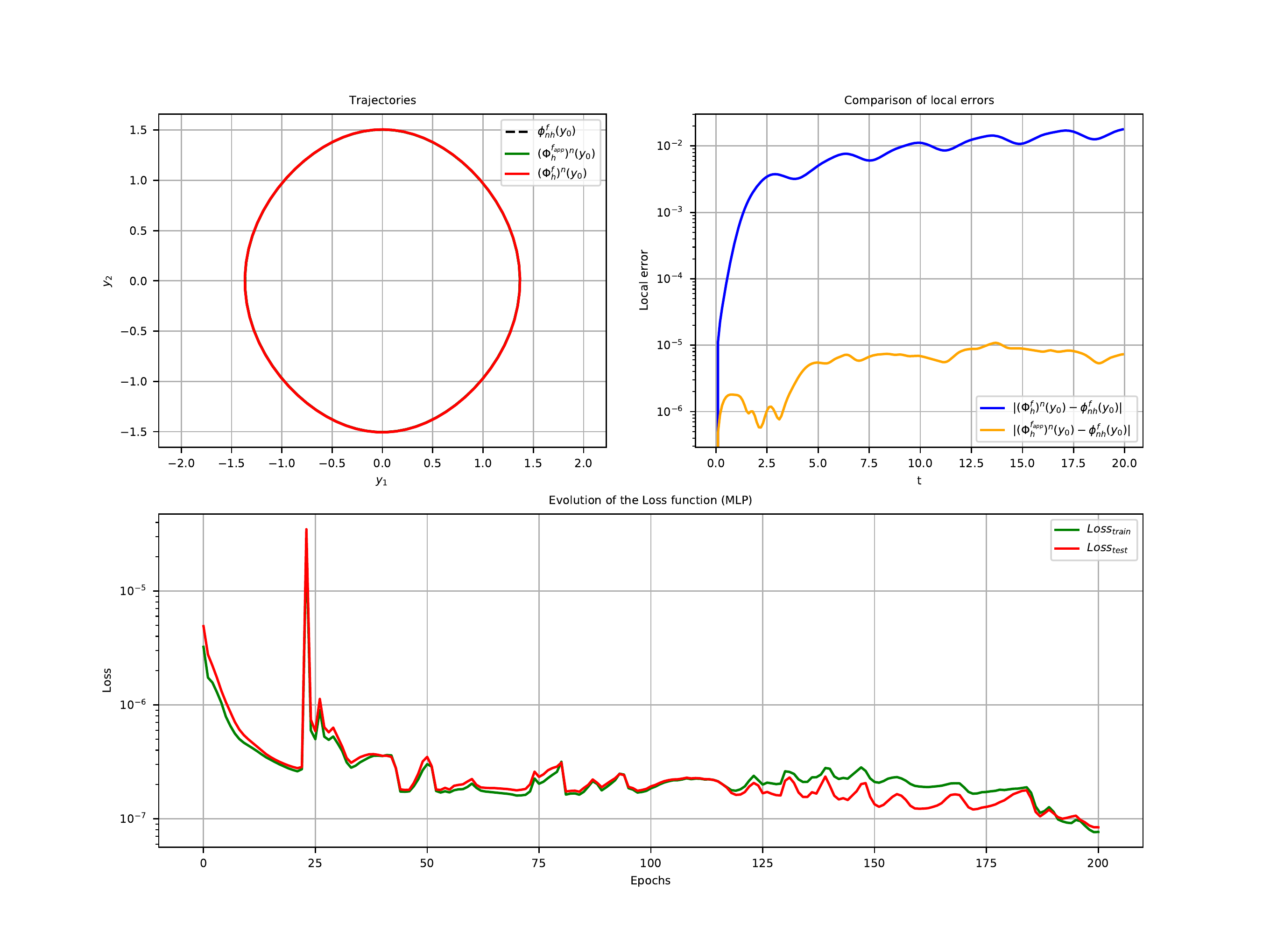}
            \caption{Comparison between $Loss$ decays (green: $Loss_{Train}$, red: $Loss_{Test}$), trajectories (dashed dark: exact flow, red: numerical flow with $f$, green: numerical flow with $f_{app}(\cdot,h)$) and local error (blue: exact flow and numerical flow with $f$, yellow: exact and numerical flow with $f_{app}(\cdot,h)$ ) for the nonlinear pendulum with Runge-Kutta 2 method.}
            \label{fig_trajectories_pendulum_RK2}
        \end{figure}

        \begin{figure}[H]
            \centering
            \includegraphics[scale = 0.6]{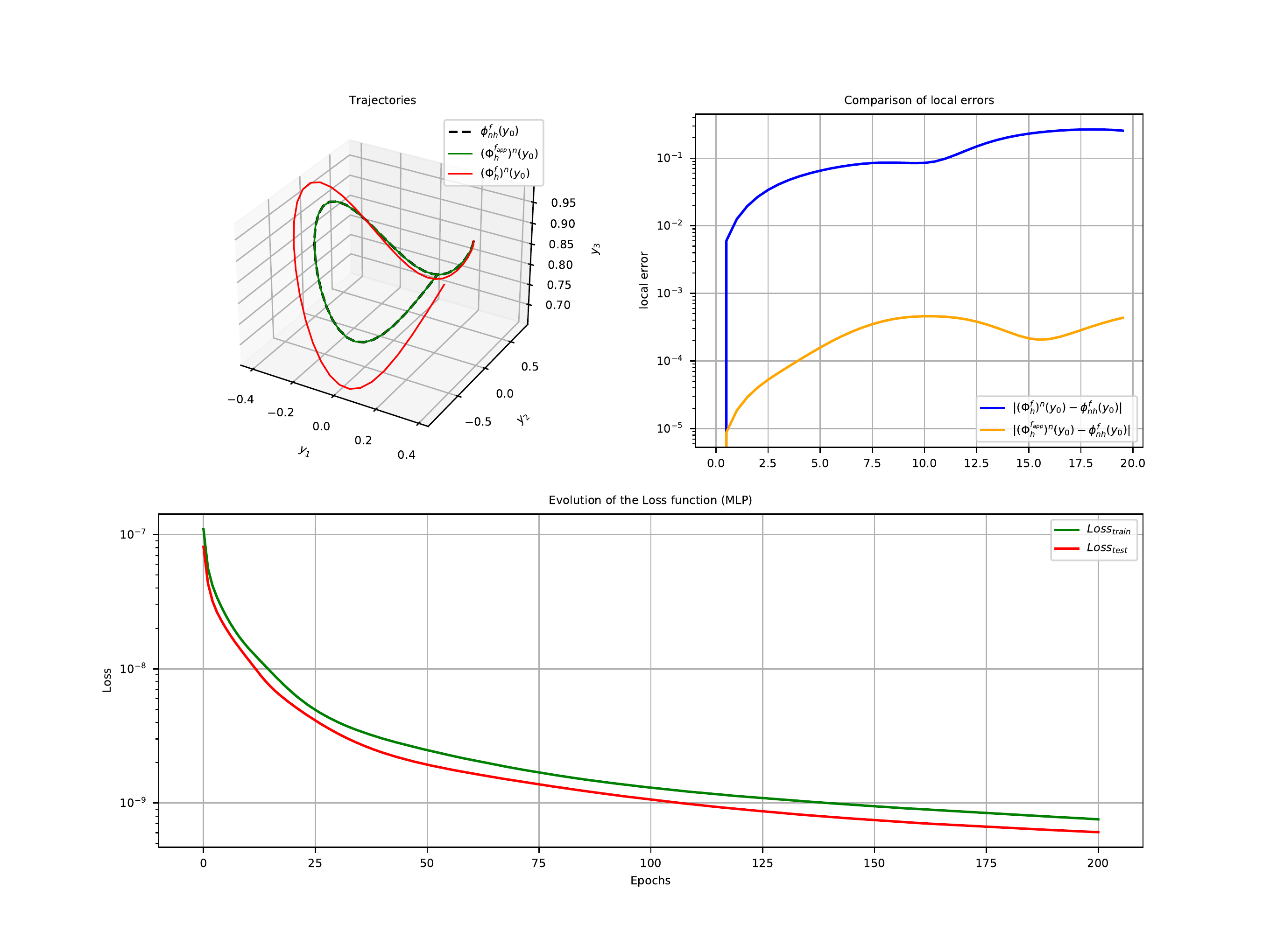}
            \caption{Comparison between $Loss$ decays (green: $Loss_{Train}$, red: $Loss_{Test}$), trajectories (dashed dark: exact flow, red: numerical flow with $f$, green: numerical flow with $f_{app}(\cdot,h)$) and local error (blue: exact flow and numerical flow with $f$, yellow: exact and numerical flow with $f_{app}(\cdot,h)$ ) for the Rigid Body system with Forward Euler method.}
            \label{fig_trajectories_rigid_body}
        \end{figure}

        As the midpoint method is a symmetric and symplectic method, it is known to preserve accurately the geometric properties of the model. In order to evaluate the extent to which this feature persists in our context, we simply plot the value of the Hamiltonian along the numerical solution obtained from learned data. We test this method for the pendulum system, which is hamiltonian. Figure \ref{fig_trajectories_pendulum_MP} shows a smaller error for integration with $f_{app}(\cdot,h)$ by using the midpoint method than integration with $f$. Moreover, the hamiltonian function of the pendulum system, given by\\
        
        \begin{eqnarray}
            H : y & \mapsto & \frac{1}{2}y_1^2 + \left(1 - \cos(y_2)\right)
        \end{eqnarray}
        
        is preserved by the midpoint method with $f_{app}(\cdot,h)$ with smaller oscillations than midpoint with $f$. Preservation is better than DOPRI5 too, which is a non-symplectic method, as shown in Figure \ref{fig_hamiltonian_evolution_pendulum_MP}.
	
	\begin{figure}[H]
            \centering
            \includegraphics[scale = 0.6]{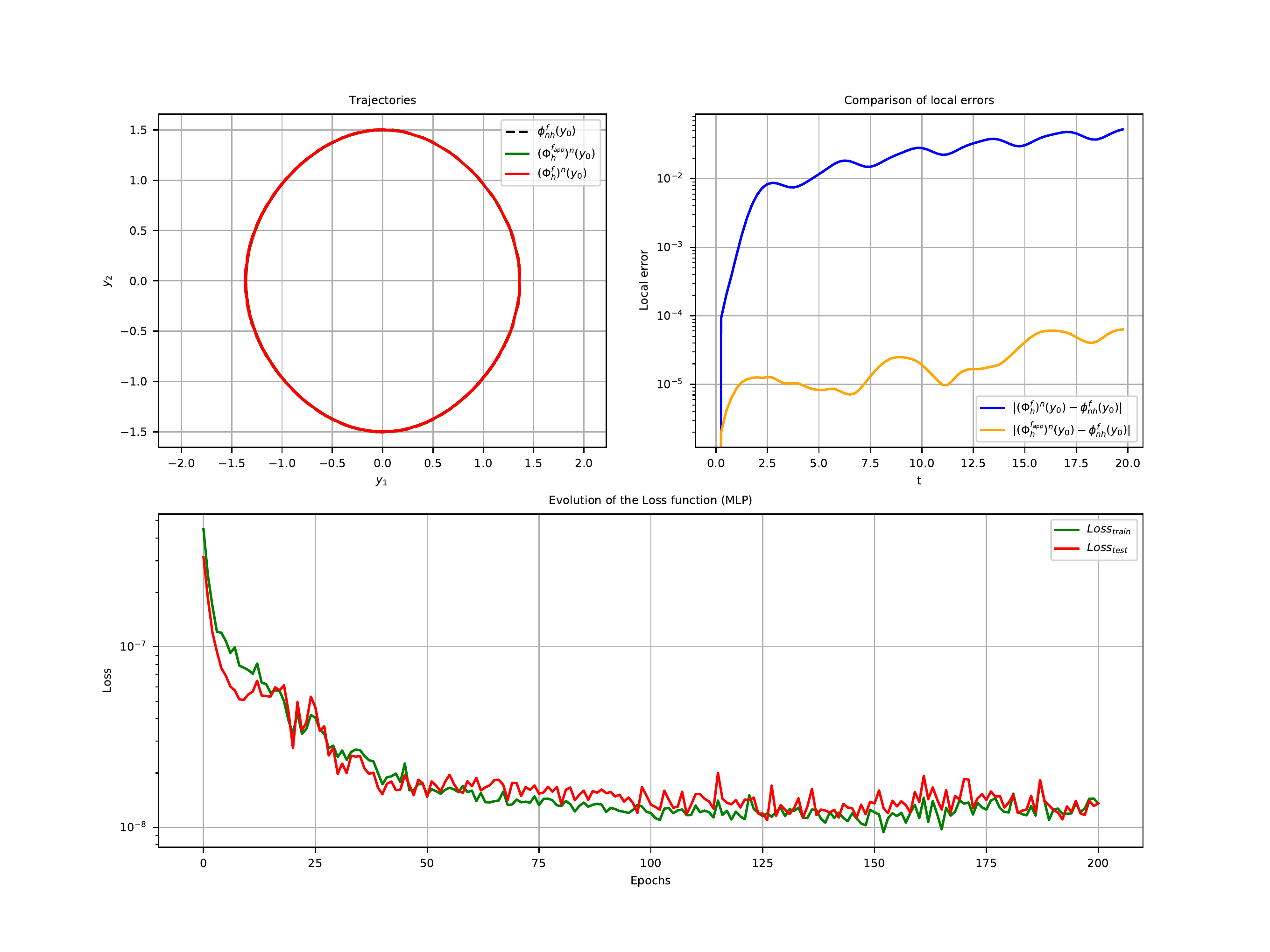}
            \caption{Comparison between $Loss$ decays (green: $Loss_{Train}$, red: $Loss_{Test}$), trajectories (dashed dark: exact flow, red: numerical flow with $f$, green: numerical flow with $f_{app}(\cdot,h)$) and local error (blue: exact flow and numerical flow with $f$, yellow: exact and numerical flow with $f_{app}(\cdot,h)$ ) for the nonlinear pendulum with midpoint method.}
            \label{fig_trajectories_pendulum_MP}
        \end{figure}

        \begin{figure}[H]
            \centering
            \includegraphics[scale = 0.8]{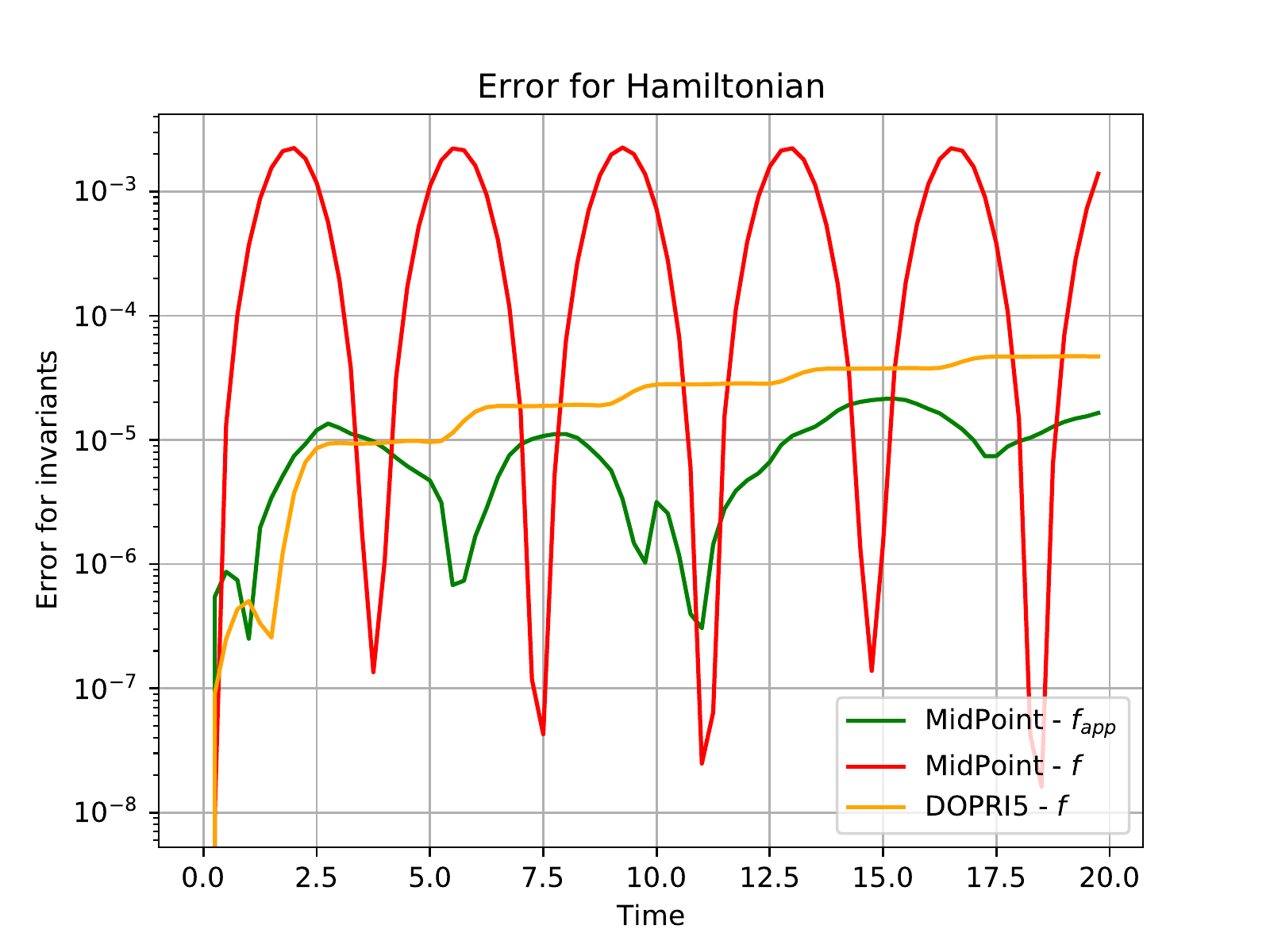}
            \caption{Evolution of the error between Hamiltonian $H:y \mapsto (1-\cos(y_2)) + \frac{1}{2}y_1^2$ over the numerical flow and Hamiltonian at $t=0$, $H(y_0)$.}
            \label{fig_hamiltonian_evolution_pendulum_MP}
        \end{figure}

	Eventually, we study the global error between the exact flow and the approximation obtained from the original field, as well as the error between the exact flow and the numerical flow obtained form the learned modified field. The errors are plot as functions of the step-size and the curves are in perfect agreement with the estimates of previous theorems (for the Forward Euler, Runge-Kutta $2$ and midpoint methods).
	
	\begin{figure}[H]
		
		\begin{minipage}{1.1\linewidth}
			\centering
			\begin{minipage}{0.45\linewidth}
				\begin{figure}[H]
					\includegraphics[width=\linewidth]{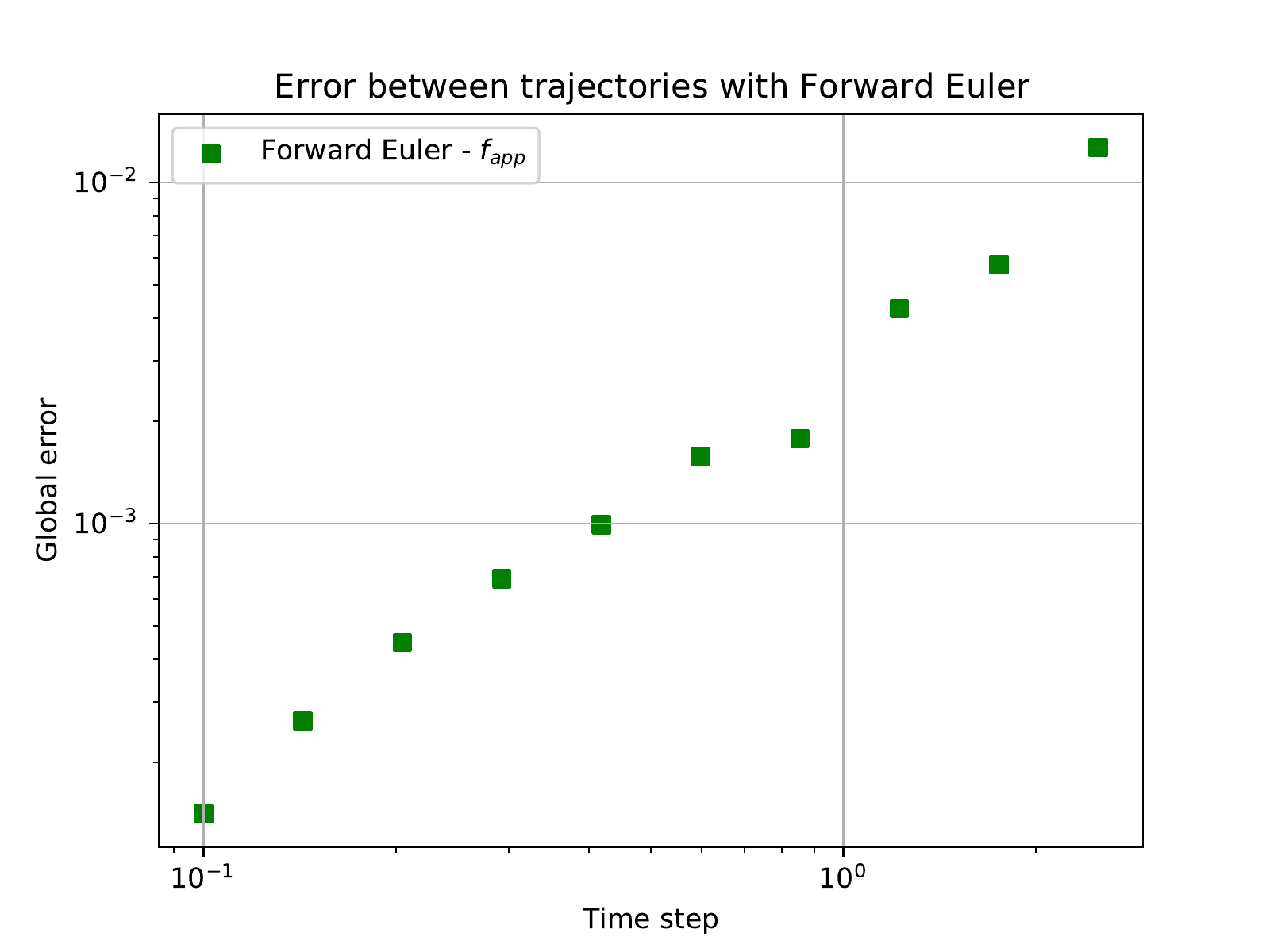}
				\end{figure}
			\end{minipage}
			\begin{minipage}{0.45\linewidth}
				\begin{figure}[H]
					\includegraphics[width=\linewidth]{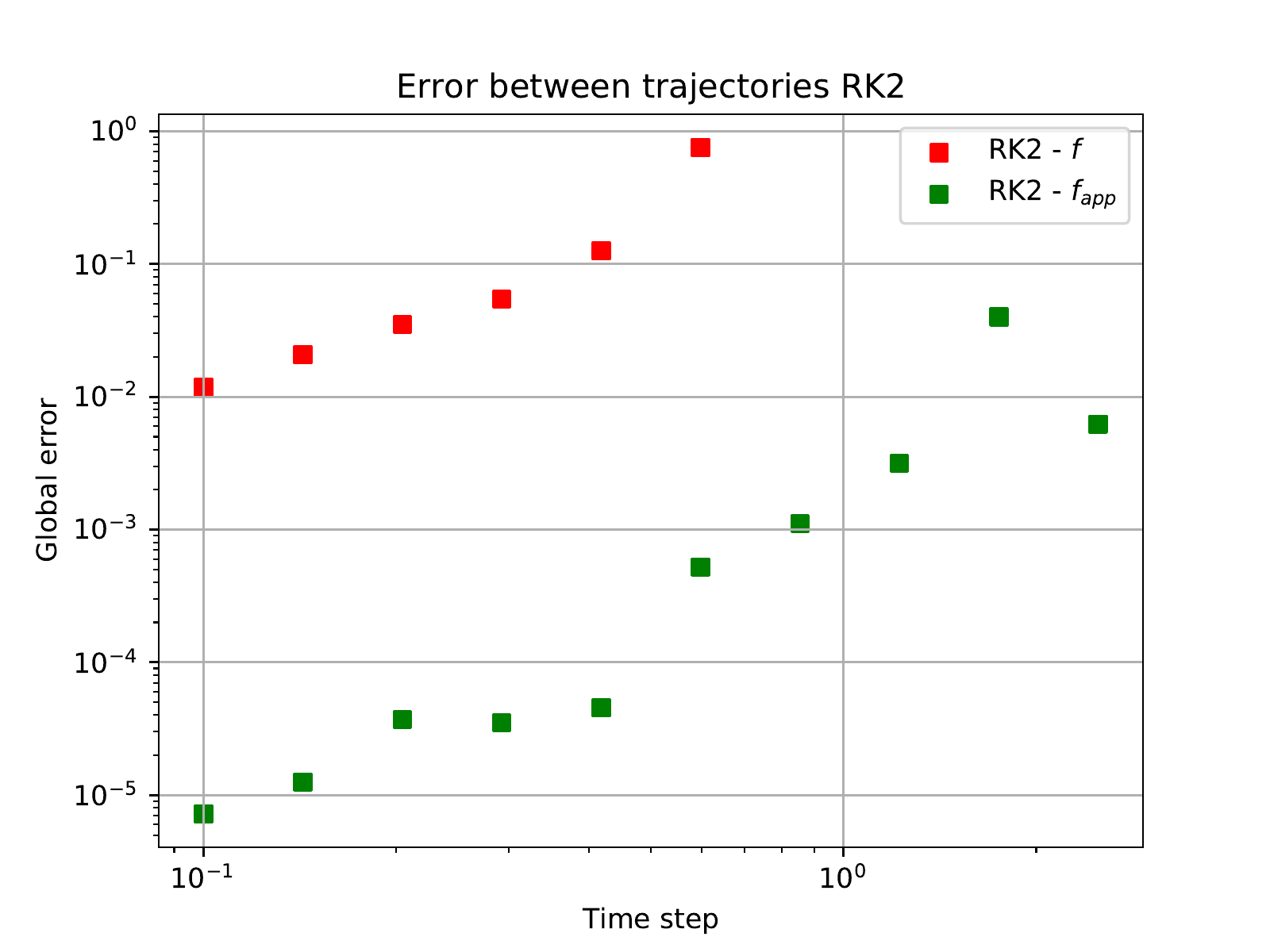}
				\end{figure}
			\end{minipage}
			
		\end{minipage}
	
		\caption{Integration errors (green: integration with $f$, red: integration with $f_{app}(\cdot,h)$). Left: Nonlinear Pendulum with Forward Euler. Right: Nonlinear Pendulum with Runge-Kutta 2. }
		\label{fig_global_error_pendulum}
	\end{figure}

	\begin{figure}[H]
		
		\begin{minipage}{1.1\linewidth}
			\centering
			\begin{minipage}{0.45\linewidth}
				\begin{figure}[H]
					\includegraphics[width=\linewidth]{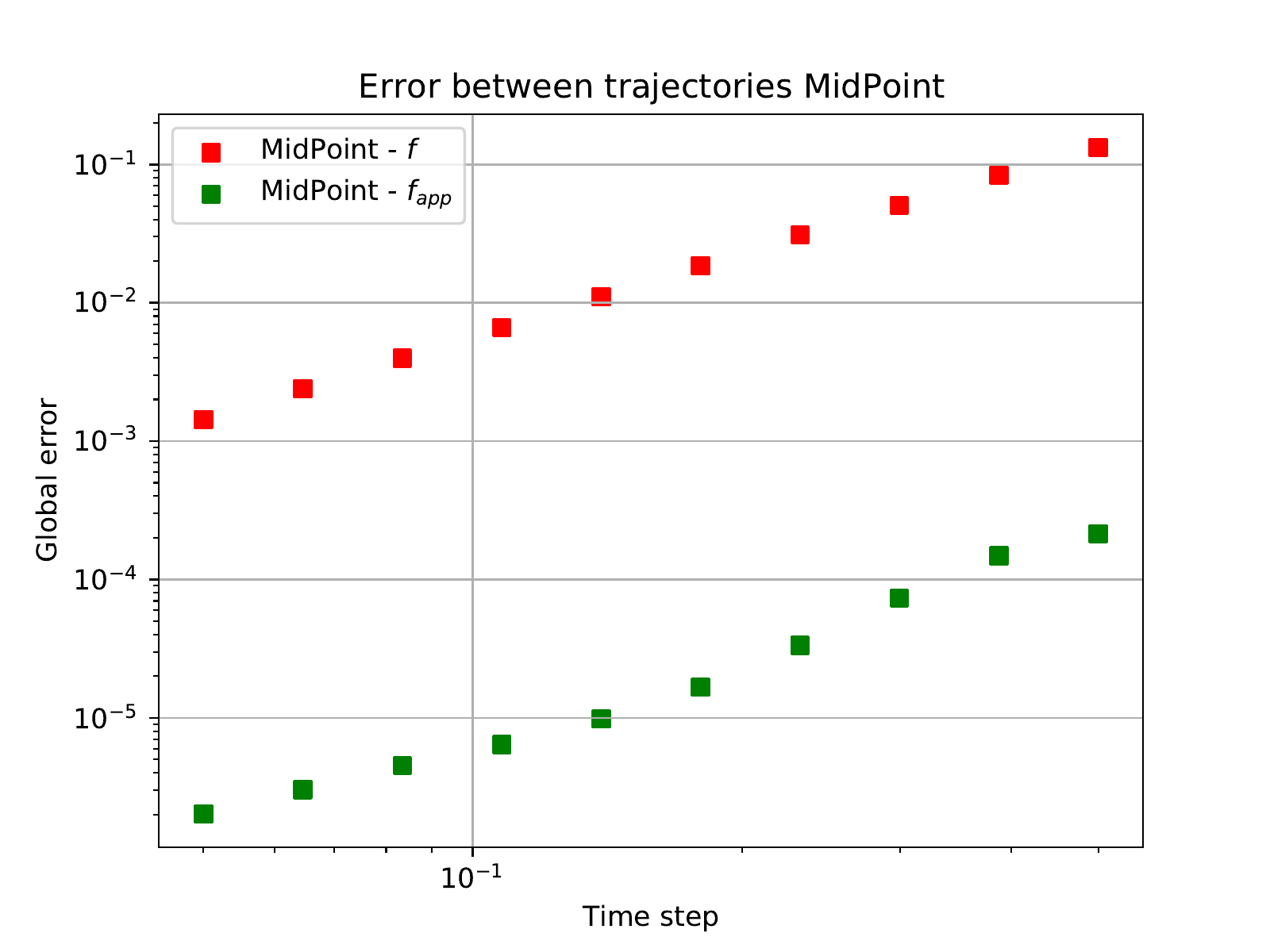}
				\end{figure}
			\end{minipage}
			\begin{minipage}{0.45\linewidth}
				\begin{figure}[H]
					\includegraphics[width=\linewidth]{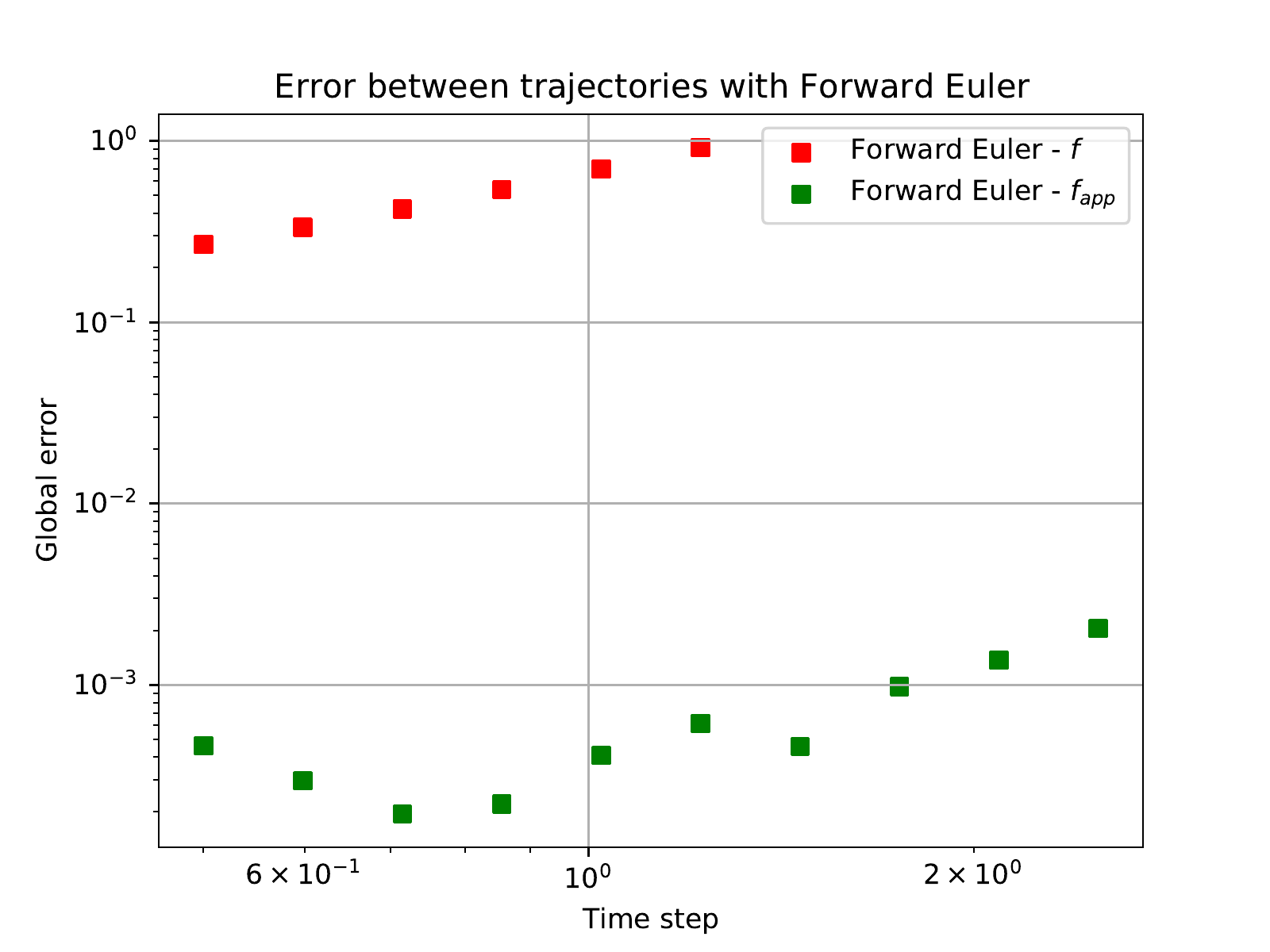}
				\end{figure}
			\end{minipage}
			
		\end{minipage}
		
		\caption{Integration errors (green: integration with $f$, red: integration with $f_{app}(\cdot,h)$). Left: midpoint method for the nonlinear Pendulum. Right: Forward Euler method for the  Rigid Body System. }
		\label{fig_global_error_rigid_body}
	\end{figure}

        Note that the estimate of Theorem \ref{th:gen} is confirmed by Figures \ref{fig_global_error_pendulum} and \ref{fig_global_error_rigid_body} , with a smaller multiplicative constant for the RK2 method.

	\subsection{Computational times for explicit methods}\label{subsection_Time_Computation}
	
	In this subsection, we plot efficiency curves (global error w.r.t. computational time). As the main goal of this paper is to design cheaper and/or more accurate solvers, we shall compare our results with the state-of-the-art  Dormand \& Prince methods \cite{SODE1}.\\

    Figures \ref{fig_time_pendulum} and \ref{fig_time_rigid_body} show numerical errors for explicit methods (Forward Euler and Runge-Kutta 2) with $f_{app}(\cdot,h)$ can be smaller than numerical errors for DOPRI5 with $f$, especially for large time steps.

	\begin{figure}[H]
		
		\begin{minipage}{1.1\linewidth}
			\centering
			\begin{minipage}{0.45\linewidth}
				\begin{figure}[H]
					\includegraphics[width=\linewidth]{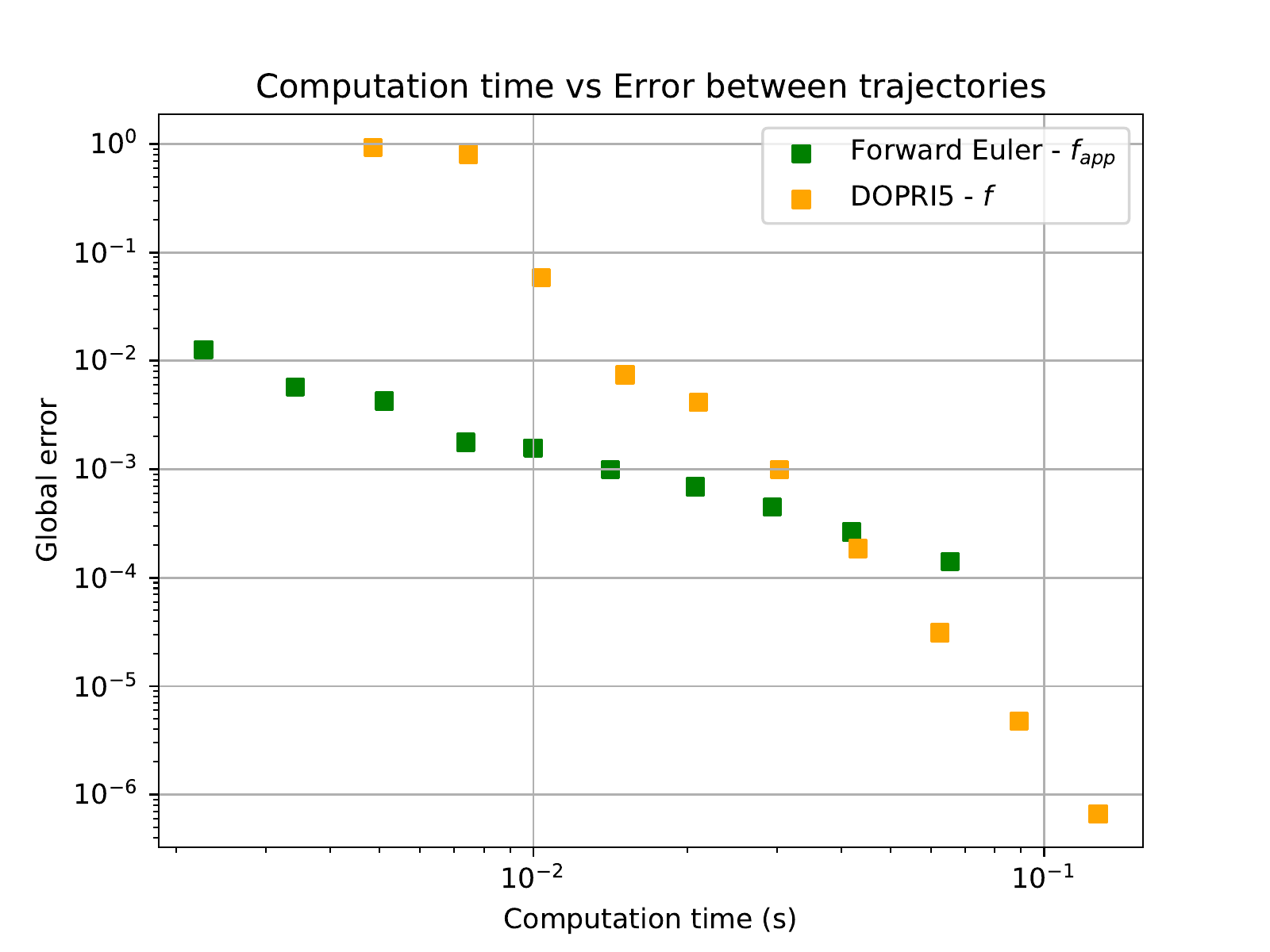}
				\end{figure}
			\end{minipage}
			\begin{minipage}{0.45\linewidth}
				\begin{figure}[H]
					\includegraphics[width=\linewidth]{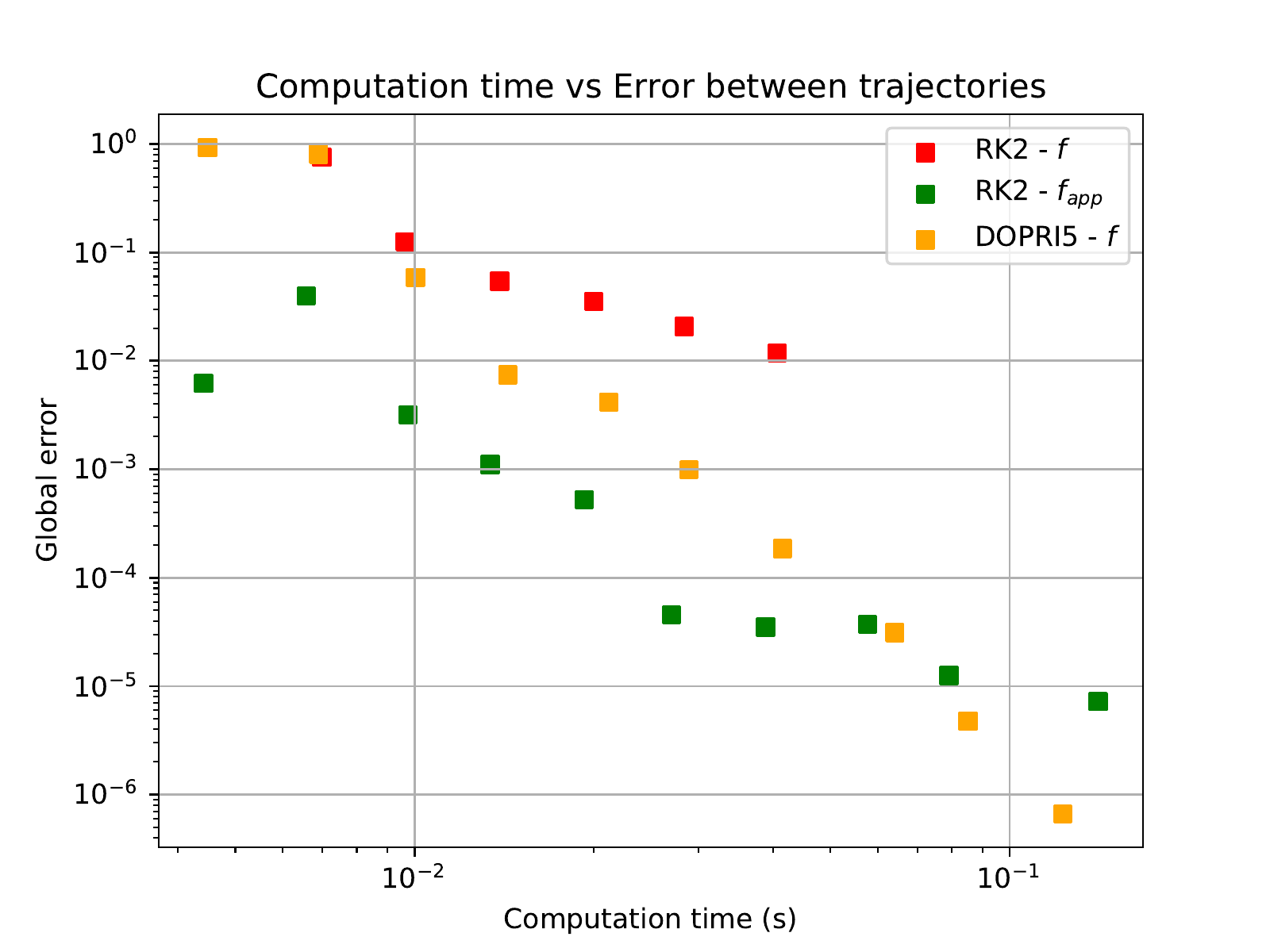}
				\end{figure}
			\end{minipage}
			
		\end{minipage}
		
		\caption{Comparison between computational time and integration error (red: numerical method with $f$, green: integration with $f_{app}(\cdot,h)$, yellow: integration with DOPRI5). Left: Nonlinear Pendulum with Forward Euler. Right: Nonlinear Pendulum with Runge-Kutta 2.}
		\label{fig_time_pendulum}
	\end{figure}

	\begin{figure}[H]
		
		\begin{minipage}{1.1\linewidth}
			\centering
			\begin{minipage}{0.45\linewidth}
				\begin{figure}[H]
					\includegraphics[width=\linewidth]{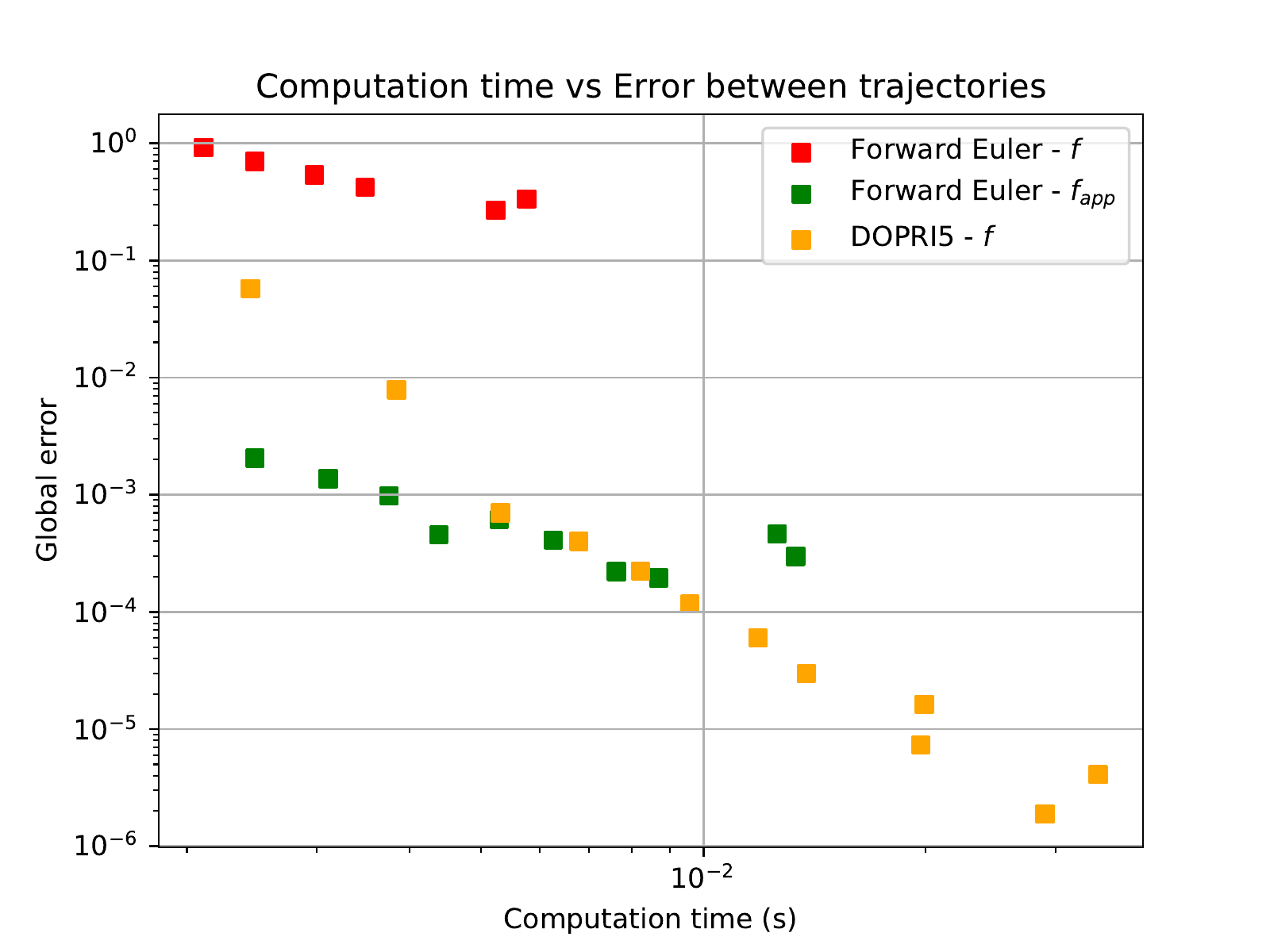}
				\end{figure}
			\end{minipage}
			\begin{minipage}{0.45\linewidth}
				\begin{figure}[H]
				\end{figure}
			\end{minipage}
			
		\end{minipage}
		
		\caption{Comparison between computational time and integration error (red: numerical method with $f$, green: integration with $f_{app}(\cdot,h)$, yellow: integration with DOPRI5) for Rigid Body system with Forward Euler method.}
		\label{fig_time_rigid_body}
	\end{figure}

	Besides, we compare the integration error with the learned modified field and the integration error using truncated modified field at the order $k$ $\tilde{f_h}^k$. For the forward Euler method, use this numerical method with $\tilde{f_h}^k$ will give a numerical method of order $k$, called \textit{modified Euler} \cite{david2021symplectic,GNI}.
	
	\begin{figure}[H]
		
		\begin{minipage}{1.1\linewidth}
			\centering
			\begin{minipage}{0.45\linewidth}
				\begin{figure}[H]
					\includegraphics[width=\linewidth]{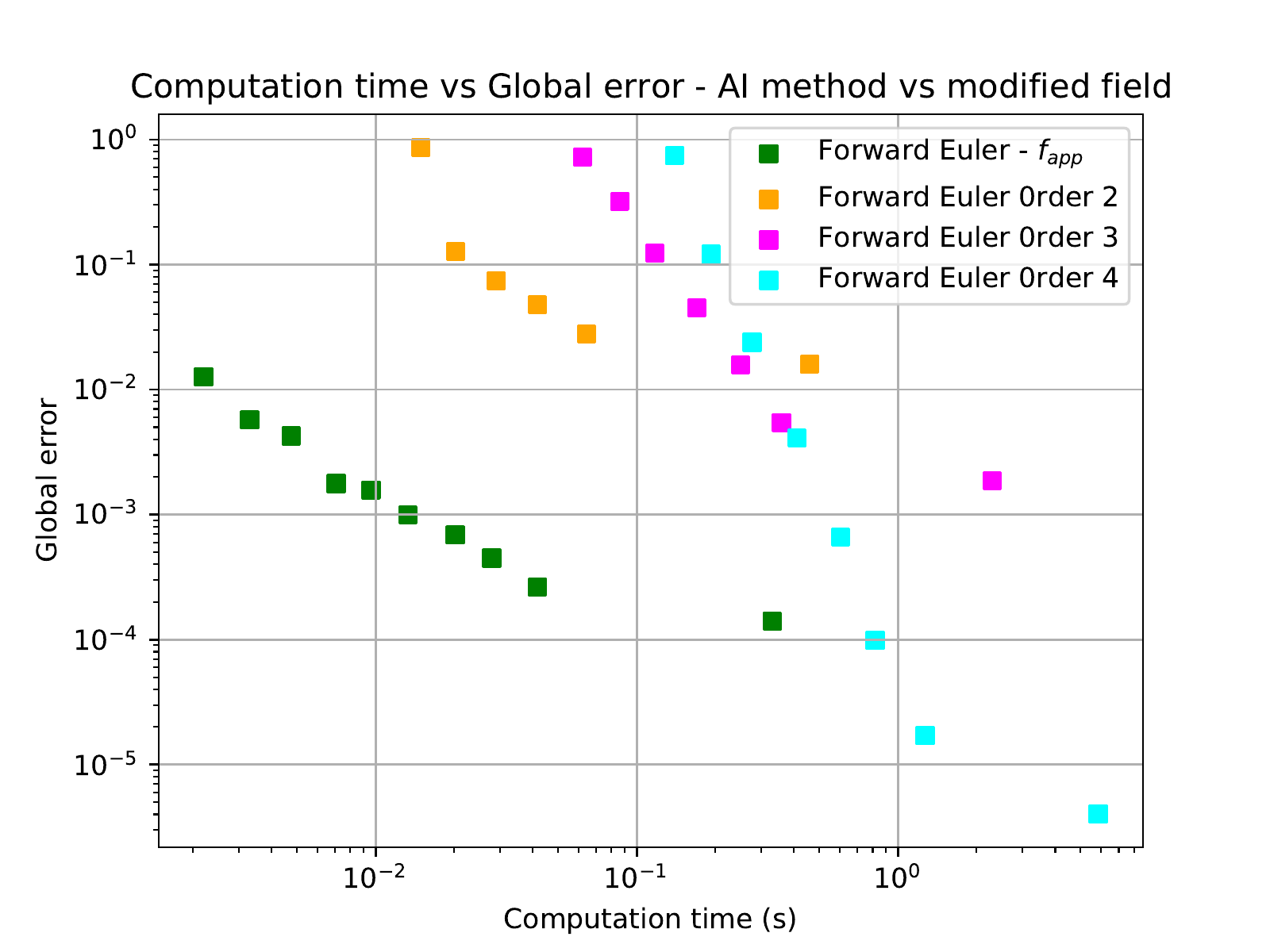}
				\end{figure}
			\end{minipage}
			\begin{minipage}{0.45\linewidth}
				\begin{figure}[H]
					\includegraphics[width=\linewidth]{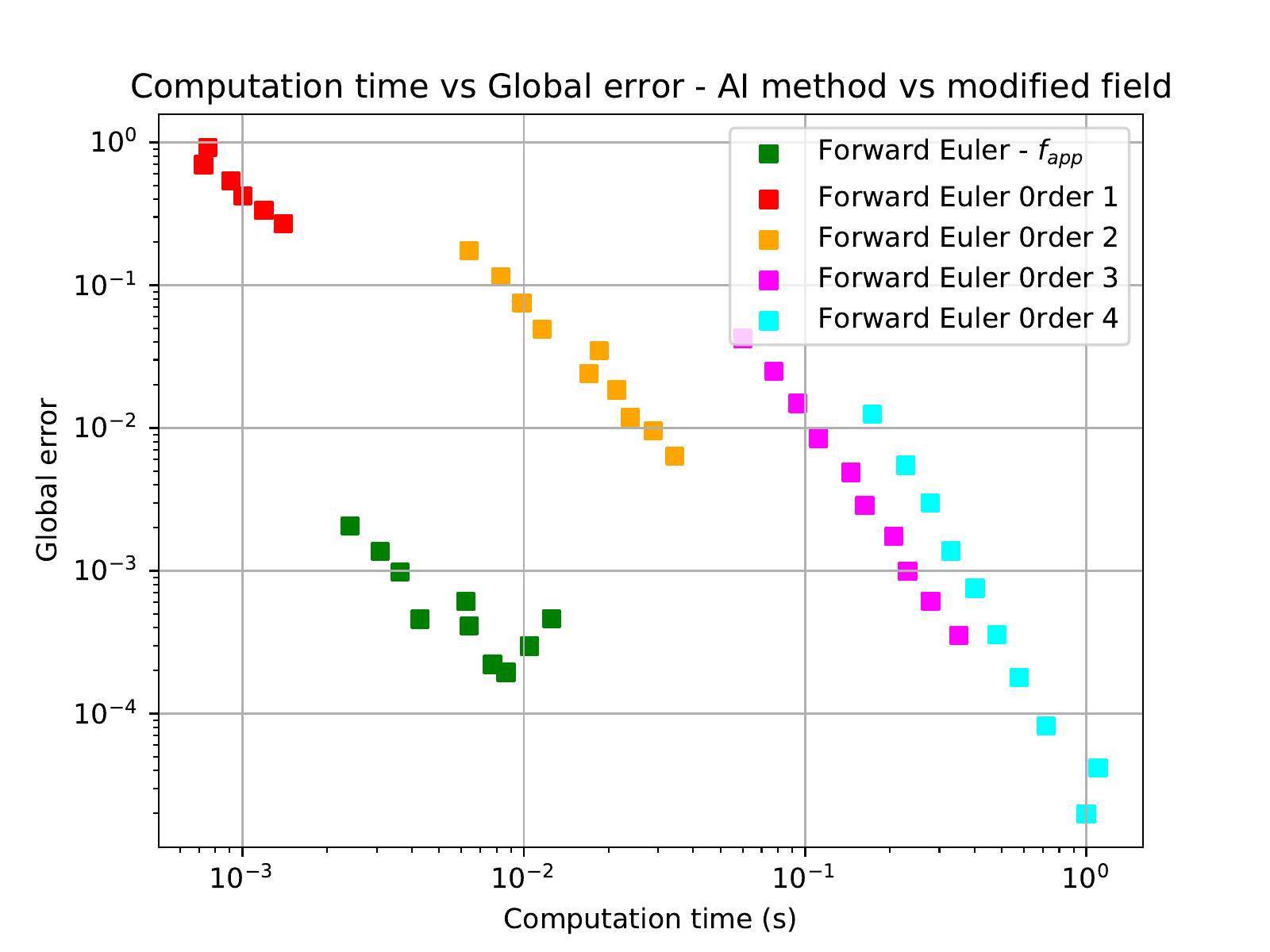}
				\end{figure}
			\end{minipage}
			
		\end{minipage}
		
		\caption{Computational time versus integration error for the Forward Euler method (red: with $f$, yellow: with $\tilde{f_h}^2$, magenta: with $\tilde{f_h}^3$, cyan: with $\tilde{f_h}^4$,  green: with $f_{app}(\cdot,h)$. Left: Nonlinear Pendulum. Right: Rigid Body system. }
		\label{fig_modif}
	\end{figure}
 
        Figure \ref{fig_modif} shows for both systems that integration with $\tilde{f_h}^k$ is more expensive in time or less accurate that integration with $f_{app}(\cdot,h)$. Moreover, we always conserve an interesting integration error whereas the other methods are not accurate at all for small computational time.

        \subsection{Evaluation of alternative method}

        In this subsection, a comparison between the two learning techniques is illustrated. For the comparison to be fair, we adapt the volume  of data so that the training times are nearly identical.\\

        Figure \ref{fig_alternative} shows that both methods are able to generalize for smaller time steps than those used for training. In the two cases, traditional and alternative method with parallel training give approximately identical results

        \begin{figure}[H]
		
		\begin{minipage}{1.1\linewidth}
			\centering
			\begin{minipage}{0.45\linewidth}
				\begin{figure}[H]
					\includegraphics[width=\linewidth]{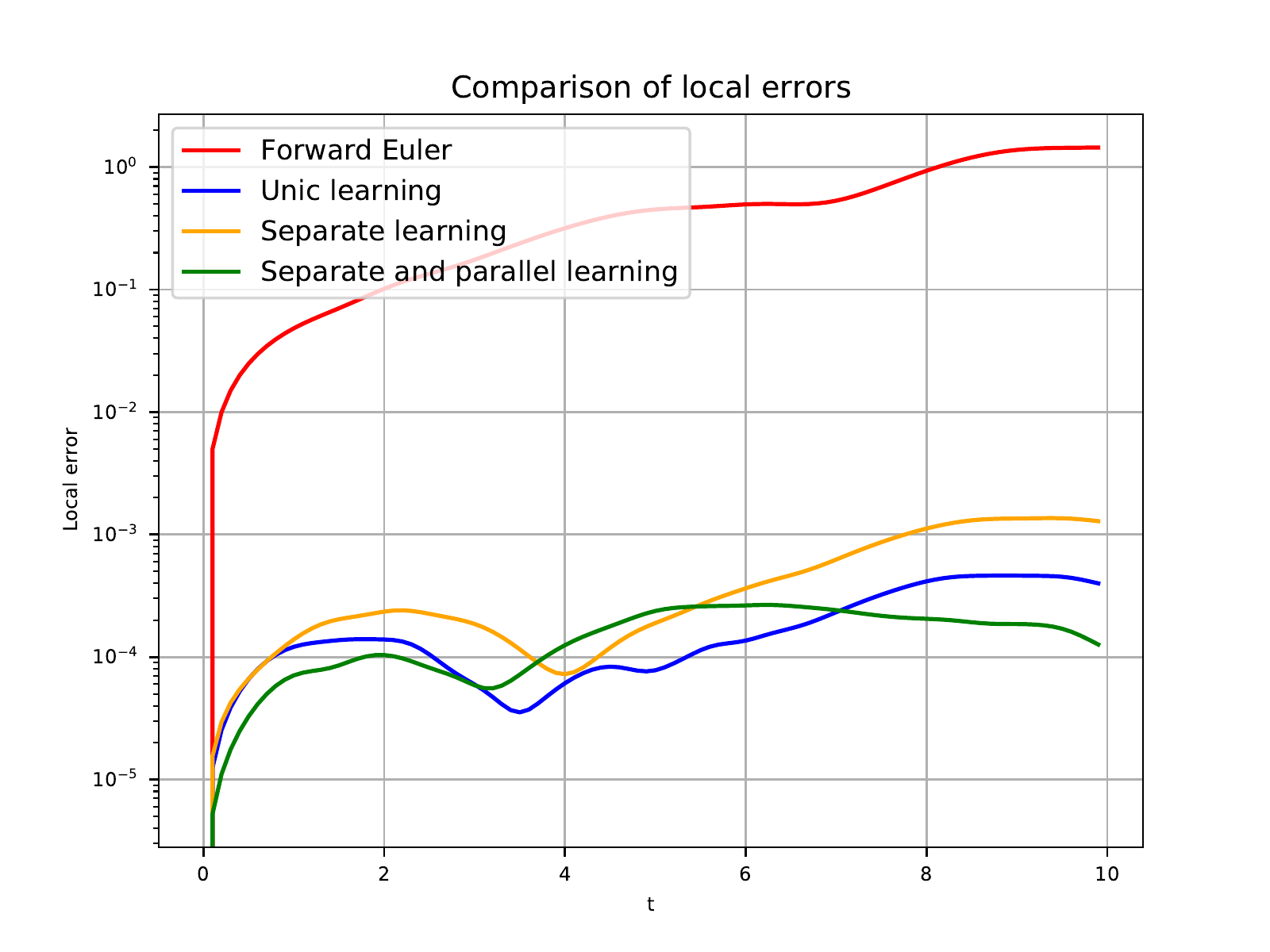}
				\end{figure}
			\end{minipage}
			\begin{minipage}{0.45\linewidth}
				\begin{figure}[H]
					\includegraphics[width=\linewidth]{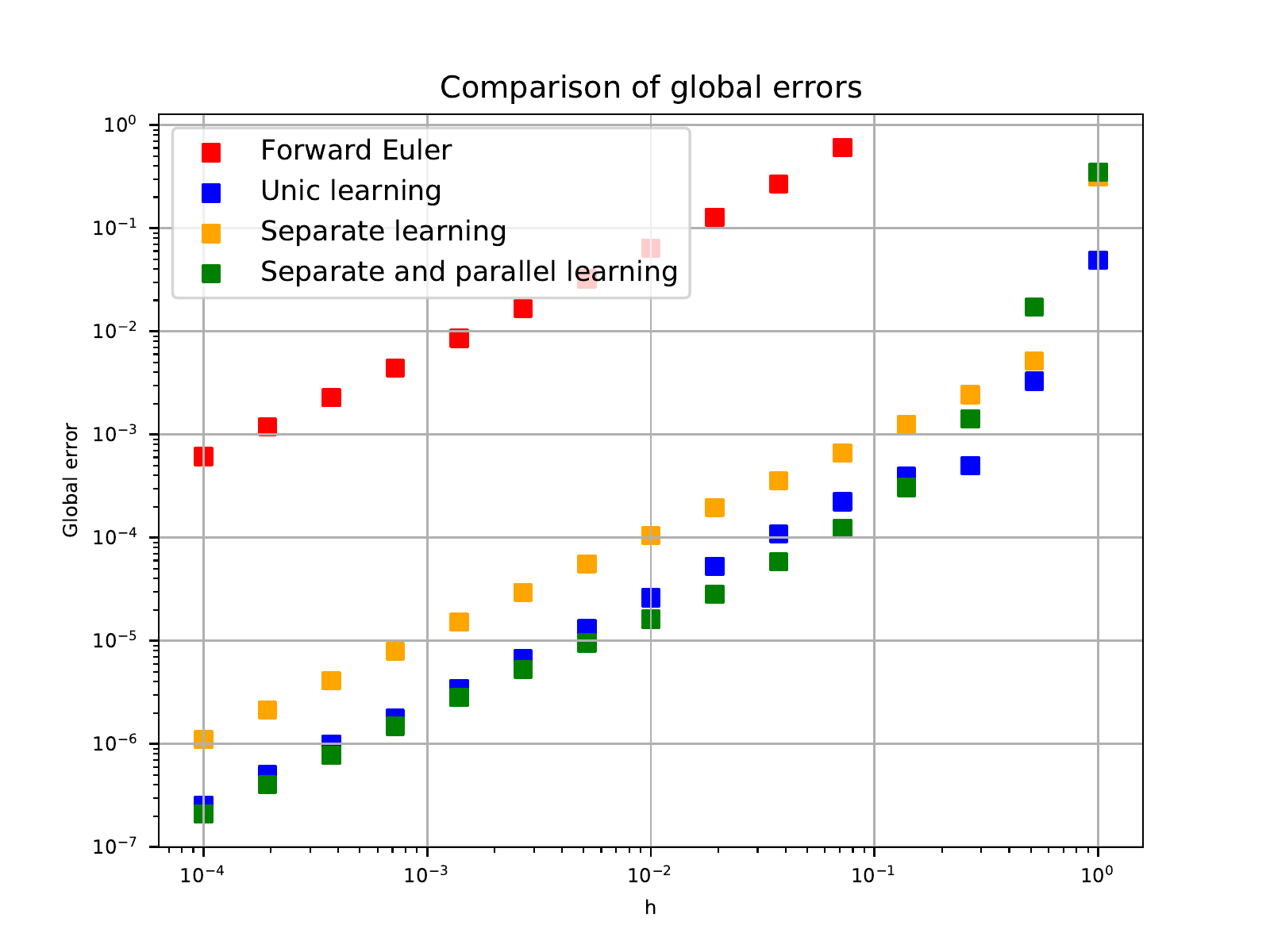}
				\end{figure}
			\end{minipage}
			
		\end{minipage}
		
		\caption{Comparison between forward Euler, traditional and alternative method for simple pendulum. Left: Comparison of local errors. Right: Comparison of integration errors.}
		\label{fig_alternative}
	\end{figure}

	\section{Conclusions}
	
	The numerical experiments presented in this paper demonstrate that learning the modified vector field is very beneficial in terms of efficiency. Clearly, the training of the network is an overload which should be considered separately for real-time applications. Another interesting outcome of our study is the fact that even when explicit formulae of the various terms of the modified field are available, it is advantageous to use its learned counterpart. Eventually, even though the technique used here is not fitted to situations where invariants should be conserved (learning directly the modified Hamiltonian would definitely be a better option), the learned vector field approximately retains the properties of its exact counterpart. It remains to be emphasized that the problems that we solved here are of small dimension and further studies with larges systems (for instance originating from PDEs) are needed. 
	
	\section*{Acknoledgements}
	
	The authors of this paper would like thank Pierre Navaro for his advises in the implementation aspects.

	\appendix
	
	\section{Proof of Theorem \ref{th:gen}}
	
	Let us consider, for time step $h$ and  for all $n \in [\![0,N]\!]$ (where $h = T/N$), the numerical scheme
	\begin{eqnarray*}
    	y_0^*  =  y_0,  \qquad y_{n+1}^* = \Phi_h^{\modapp(\cdot,h)}\left(y_n^*\right) \label{Tokyo_General}
	\end{eqnarray*}
The consistency error is of the form 
\begin{eqnarray*}
    		\eps_n^* & := & y(t_{n+1}) - \Phi_h^{\modapp(\cdot,h)}\left( y(t_n) \right) \label{London_General} \\
    		               & = & \underset{=0 \text{ (Modified field)}}{\underbrace{y(t_{n+1}) - \Phi_h^{\modif}\left( y(t_n) \right)}} + \Phi_h^{\modif}\left( y(t_n) \right)  - \Phi_h^{\modapp(\cdot,h)}\left( y(t_n) \right) \nonumber \\
    		& = & \Phi_h^{\modif}\left( y(t_n) \right)  - \Phi_h^{\modapp(\cdot,h)}\left( y(t_n) \right) \nonumber
		\end{eqnarray*}
		so that, taking $(\ref{NYC_RKE})$ and $(\ref{Toronto_General})$ into account, we have
		\begin{eqnarray*}
    		\left| \eps_n^* \right| & \leqslant & C\delta h^{p+1} .
		\end{eqnarray*}
Upon using $(\ref{Paris_RKE})$, $(\ref{Tokyo_General})$ and $(\ref{London_General})$, the local truncation error can then be written as 
		\begin{eqnarray*}
			e_{n+1}^* & = & \Phi_h^{\modapp(\cdot,h)}(y_n^*) - \Phi_h^{\modapp(\cdot,h)}(y(t_n)) - \eps_n^*,
		\end{eqnarray*}
		and from $(\ref{Beijing_General})$, we get
		\begin{eqnarray*}
			\left| e_{n+1}^* \right|  \leqslant  \left(1+\overline{L} h\right)\left|  e_n^*\right| + \left| \eps_n^* \right| 
			 \leqslant  \left(1+\overline{L} h\right)\left|  e_n^*\right| + C\delta h^{p+1} 
		\end{eqnarray*}
		where $\overline{L}  :=  \underset{h \in [h_-,h_+]}{Max}L_{\modapp(\cdot,h)}$. 
		A discrete Grönwall lemma then leads to 
		\begin{eqnarray*}
			\left|e_n^*\right|  \leqslant   C\delta h^{p+1} \sum_{j=0}^{n-1}e^{\overline{L}(n-j-1)h} 
			 \leqslant  C\delta h^{p+1} \frac{e^{\overline{L} nh}-1}{e^{\overline{L} h}-1} 
			 \leqslant & \frac{C\delta h^{p}}{\overline{L}}\left( e^{\overline{L} T}-1 \right).
		\end{eqnarray*}

	\section{Choice of the parameters}
	
	\subsection{Link between learning error and parameters}\label{Learning_error_parameters}
	
	The influence of the number of parameters over the learning error has been studied. In particular, we have studied the effect of the number of neurons and hidden layers. The dynamical system chosen for the test is the non-linear pendulum while the numerical method is simply the Forward Euler method. In order to approximate the learning error, we compute the value
	\begin{eqnarray}
		\delta & \approx & \underset{h \in H^*}{Max}\frac{1}{h} \lnorm \tilde{f_h}^4(x_{i,j}) - f_{app}(x_{i,j},h²) \rnorm_{l^{\infty}}
	\end{eqnarray}
	where $(x_{i,j})_{0 \leqslant i,j \leqslant 40}$ is a uniform grid on the square $\Omega = [-2,2]^2$, $H^* = (e^{h^*_j})_{0 \leqslant j \leqslant 14}$ where $(h^*_j)_{0 \leqslant j \leqslant 14}$ is a uniform discretization of $[\log(h^-),\log(h^+)]$ and $\tilde{f_h}^4$ corresponds to the field $(\ref{HK_RKE})$ for $k=4$ with $R$ neglected, computed via the formulas given in \cite{chartier2007modified, GNI}.\\
	
	As observed in Figure \ref{fig_learning_error}, a plateau appears when the number of parameters is large. This plateau has a lower value for a larger number of data.  Moreover, we observe that deep networks are more efficient than shallow networks in order to learn the good vector field.
	
	\begin{figure}[H]
		
		\begin{minipage}{1.1\linewidth}
			\centering
			\begin{minipage}{0.45\linewidth}
				\begin{figure}[H]
					\includegraphics[width=\linewidth]{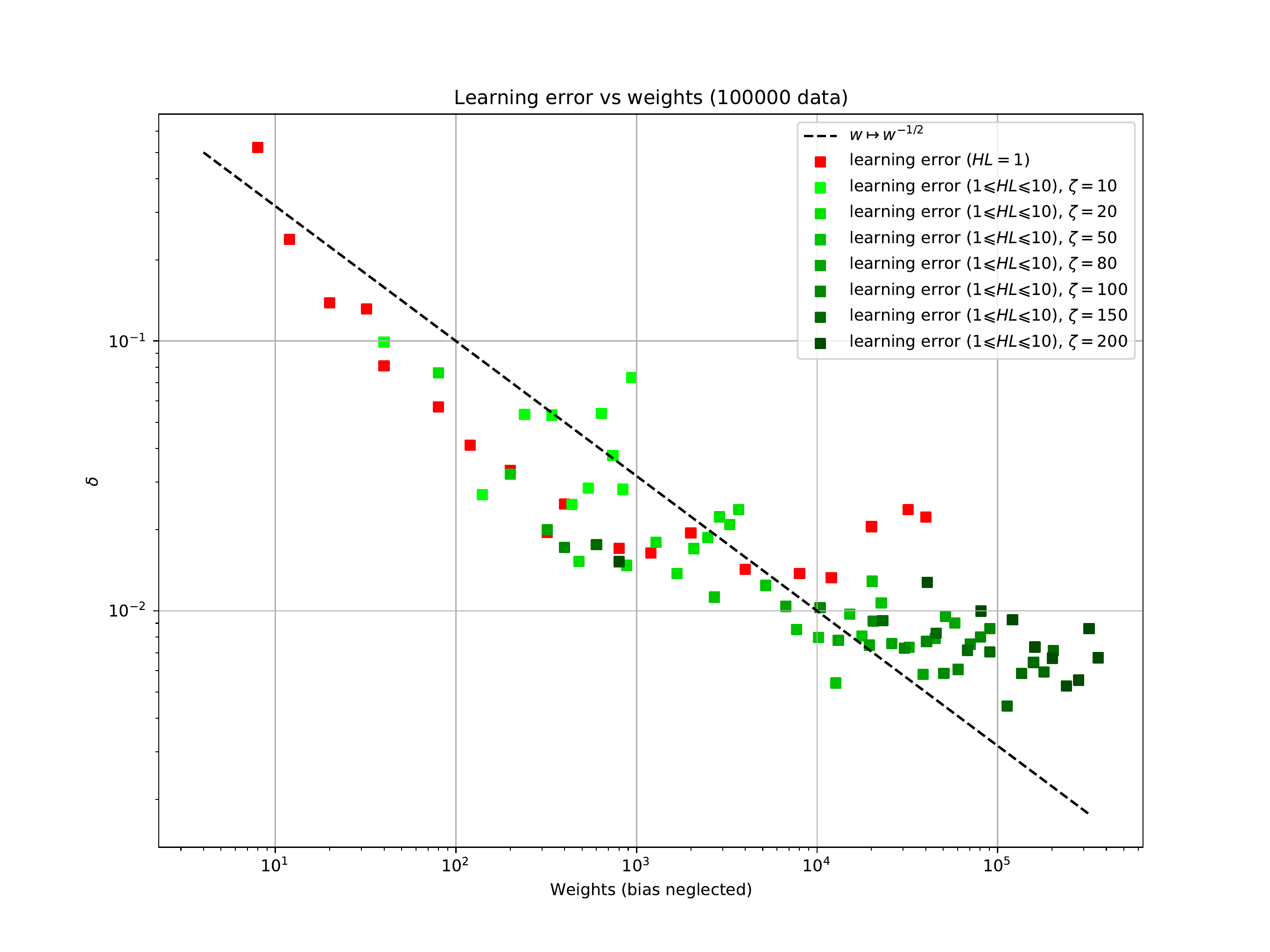}
				\end{figure}
			\end{minipage}
			\begin{minipage}{0.45\linewidth}
				\begin{figure}[H]
					\includegraphics[width=\linewidth]{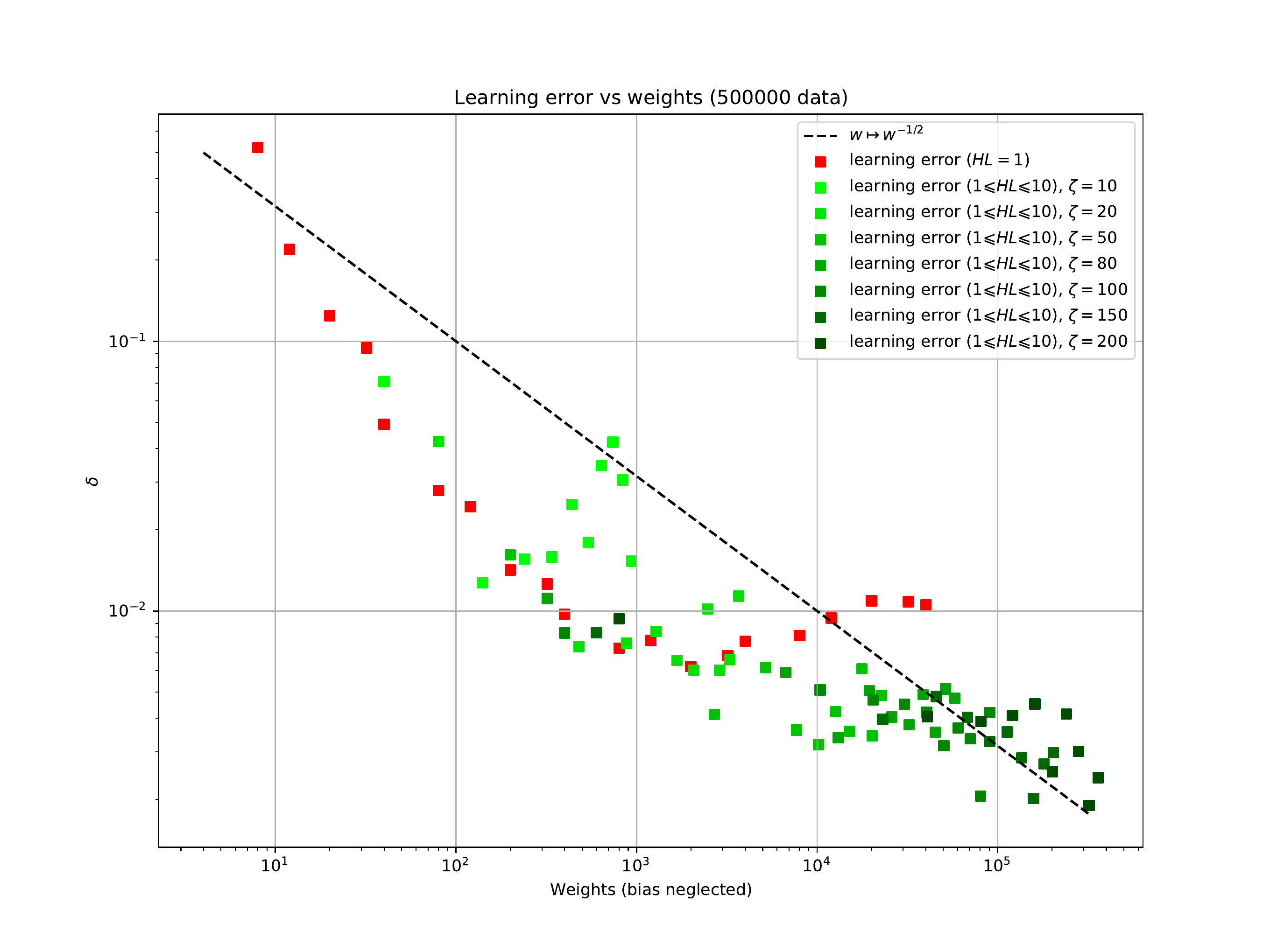}
				\end{figure}
			\end{minipage}
			
		\end{minipage}
		\caption{Learning error $\delta$ versus number of parameters $w$ in the neural network (number of neurons $\zeta$ and hidden layers $HL$). Shallow network is plot with red points whereas deep network is plot with green points (light green for $10$ neurons to dark green for $200$ neurons). Bias are neglected. Learning error is plot for \numprint{100000} data (left) and \numprint{500000} data (right). $200$ epochs are used. $[h^-,h^+] = [10^{-2},10^{-1}]$. The curve of $w \mapsto w^{\frac{1}{2}}$ has been added for comparison purposes.}
		\label{fig_learning_error}
	\end{figure}
	
	\subsection{Parameters selected in our numerical experiments}
	
	For the training, the optimizer \textit{Adam} of Pytorch is used. Besides, the mini-batching option is activated as it appears to be more efficient. Hyperbolic tangent $\tanh$ was selected as activation function.
	
	\subsubsection{Nonlinear Pendulum - Forward Euler}\label{Parameters_Pendulum_Euler}
	
	\begin{center}
	{\tiny 
		\begin{tabular}{| l | l |}
			\hline
			\multicolumn{2}{|c|}{\textcolor{red}{\textbf{Parameters}}} \\
			\hline
			\multicolumn{2}{|l|}{\textcolor{DarkGreen}{\textbf{\# Math Parameters:}}} \\
			\hline
			\textbf{Dynamical system:} & Pendulum \\
			\textbf{Numerical method:} & Forward Euler \\
			\textbf{Interval where time steps are selected:} &  $[h_-,h_+]=[0.1,2.5]$ \\
			\textbf{Time for ODE simulation:} &  $T=20$ \\
			\textbf{Time step for ODE simulation:} & $h=0.1$ \\
			\hline
			\multicolumn{2}{|l|}{\textcolor{DarkGreen}{\textbf{\# AI Parameters:}}} \\
			\hline
			\textbf{Domain where data are selected:} &  $\Omega = [-2,2]^2$ \\
			\textbf{Number of data:} & $K=\numprint{25000000}$ \\
			\textbf{Proportion of data for training:} & $80 \%$ - $K_0 = \numprint{20000000}$ \\
			\textbf{Number of terms in the perturbation (MLP's):} & $N_t = 1$ \\
			\textbf{Hidden layers per MLP:} & $2$ \\
			\textbf{Neurons on each hidden layer:} & $200$ \\
			\textbf{Learning rate:} & $2\cdot 10^{-3}$ \\
			\textbf{Weight decay:} & $1\cdot 10^{-9}$ \\
			\textbf{Batch size (mini-batching for training):} & $300$ \\
			\textbf{Epochs:} & $200$ \\
			\textbf{Epochs between two prints of loss value:} & $20$ \\
			\hline
		\end{tabular}
		}
	\end{center}
	\textbf{Computational time  for training:} $10\text{ h }14\text{ min }17\text{ s}$

	\subsubsection{Rigid body system - Forward Euler}\label{Parameters_Rigid_Body_Euler}
	
	Casimir invariant introduced at the beginning of the section \ref{section_Numerical_experiments} allows to chose the training data in the spherical crown $\left\{ x \in [-2,2]^2 : 0.98 \leqslant |x| \leqslant 1.02 \right\}$
	
	\begin{center}
	{\tiny 
		\begin{tabular}{| l | l |}
			\hline
			\multicolumn{2}{|c|}{\textcolor{red}{\textbf{Parameters}}} \\
			\hline
			\multicolumn{2}{|l|}{\textcolor{DarkGreen}{\textbf{\# Math Parameters:}}} \\
			\hline
			\textbf{Dynamical system:} & Rigid Body \\
			\textbf{Numerical method:} & Forward Euler \\
			\textbf{Interval where time steps are selected:} &  $[h_-,h_+]=[0.5,2.5]$ \\
			\textbf{Time for ODE simulation:} &  $T=20$ \\
			\textbf{Time step for ODE simulation:} & $h=0.5$ \\
			\hline
			\multicolumn{2}{|l|}{\textcolor{DarkGreen}{\textbf{\# AI Parameters:}}} \\
			\hline
			\textbf{Domain where data are selected:} &  $\Omega = \left\{ x \in [-2,2]^2 : 0.98 \leqslant |x| \leqslant 1.02 \right\}$ \\
			\textbf{Number of data:} & $K=\numprint{100000000}$ \\
			\textbf{Proportion of data for training:} & $80 \%$ - $K_0 = \numprint{80000000}$ \\
			\textbf{Number of terms in the perturbation (MLP's):} & $N_t = 1$ \\
			\textbf{Hidden layers per MLP:} & $2$ \\
			\textbf{Neurons on each hidden layer:} & $250$ \\
			\textbf{Learning rate:} & $2\cdot 10^{-3}$ \\
			\textbf{Weight decay:} & $1\cdot 10^{-9}$ \\
			\textbf{Batch size (mini-batching for training):} & $300$ \\
			\textbf{Epochs:} & $200$ \\
			\textbf{Epochs between two prints of loss value:} & $20$ \\
			\hline
		\end{tabular}
		}
	\end{center}
	\textbf{Computational time for training:} $1 \text{ Day }21\text{ h }59\text{ min }51\text{ s}$

	\subsubsection{Nonlinear Pendulum - Runge-Kutta 2}\label{Parameters_Pendulum_RK2}
	
	\begin{center}
	{\tiny 
		\begin{tabular}{| l | l |}
			\hline
			\multicolumn{2}{|c|}{\textcolor{red}{\textbf{Parameters}}} \\
			\hline
			\multicolumn{2}{|l|}{\textcolor{DarkGreen}{\textbf{\# Math Parameters:}}} \\
			\hline
			\textbf{Dynamical system:} & Pendulum \\
			\textbf{Numerical method:} & RK2 \\
			\textbf{Interval where time steps are selected:} &  $[h_-,h_+]=[0.1,2.5]$ \\
			\textbf{Time for ODE simulation:} &  $T=20$ \\
			\textbf{Time step for ODE simulation:} & $h=0.1$ \\
			\hline
			\multicolumn{2}{|l|}{\textcolor{DarkGreen}{\textbf{\# AI Parameters:}}} \\
			\hline
			\textbf{Domain where data are selected:} &  $\Omega = [-2,2]^2$ \\
			\textbf{Number of data:} & $K=\numprint{100000000}$ \\
			\textbf{Proportion of data for training:} & $80 \%$ - $K_0 = \numprint{80000000}$ \\
			\textbf{Number of terms in the perturbation (MLP's):} & $N_t = 1$ \\
			\textbf{Hidden layers per MLP:} & $2$ \\
			\textbf{Neurons on each hidden layer:} & $250$ \\
			\textbf{Learning rate:} & $5\cdot 10^{-4}$ \\
			\textbf{Weight decay:} & $1\cdot 10^{-9}$ \\
			\textbf{Batch size (mini-batching for training):} & $300$ \\
			\textbf{Epochs:} & $200$ \\
			\textbf{Epochs between two prints of loss value:} & $20$ \\
			\hline
		\end{tabular}
		}
	\end{center}
	\textbf{Computational time for training:} $3 \text{ Days }2\text{ h }54\text{ min }35\text{ s}$

	\subsubsection{Nonlinear Pendulum - midpoint}\label{Parameters_Pendulum_midpoint}
	
	\begin{center}
	{\tiny 
		\begin{tabular}{| l | l |}
			\hline
			\multicolumn{2}{|c|}{\textcolor{red}{\textbf{Parameters}}} \\
			\hline
			\multicolumn{2}{|l|}{\textcolor{DarkGreen}{\textbf{\# Math Parameters:}}} \\
			\hline
			\textbf{Dynamical system:} & Pendulum \\
			\textbf{Numerical method:} & midpoint \\
			\textbf{Interval where time steps are selected:} &  $[h_-,h_+]=[0.05,0.5]$ \\
			\textbf{Time for ODE simulation:} &  $T=20$ \\
			\textbf{Time step for ODE simulation:} & $h=0.25$ \\
			\hline
			\multicolumn{2}{|l|}{\textcolor{DarkGreen}{\textbf{\# AI Parameters:}}} \\
			\hline
			\textbf{Domain where data are selected:} &  $\Omega = [-2,2]^2$ \\
			\textbf{Number of data:} & $K=\numprint{20000000}$ \\
			\textbf{Proportion of data for training:} & $80 \%$ - $K_0 = \numprint{16000000}$ \\
			\textbf{Number of terms in the perturbation (MLP's):} & $N_t = 1$ \\
			\textbf{Hidden layers per MLP:} & $2$ \\
			\textbf{Neurons on each hidden layer:} & $200$ \\
			\textbf{Learning rate:} & $2\cdot 10^{-3}$ \\
			\textbf{Weight decay:} & $1\cdot 10^{-9}$ \\
			\textbf{Batch size (mini-batching for training):} & $300$ \\
			\textbf{Epochs:} & $200$ \\
			\textbf{Epochs between two prints of loss value:} & $20$ \\
			\hline
		\end{tabular}
		}
	\end{center}
	\textbf{Computational time for training:} $9\text{ h }47\text{ min }51\text{ s}$

 \subsection{Nonlinear Pendulum - Comparison between traditional and alternative method}

    For the standard method, data are created by simulating several solutions with the same initial condition for different time steps.
     \begin{center}
     {\tiny 
    		\begin{tabular}{| l | l |}
    			\hline
    			\multicolumn{2}{|c|}{\textcolor{red}{\textbf{Parameters}}} \\
    			\hline
    			\multicolumn{2}{|l|}{\textcolor{DarkGreen}{\textbf{\# Math Parameters:}}} \\
    			\hline
    			\textbf{Dynamical system:} & Pendulum \\
    			\textbf{Numerical method:} & Forward Euler \\
    			\textbf{Interval where time steps are selected:} &  $[h_-,h_+]=[0.01,0.5]$ \\
    			\textbf{Time for ODE simulation:} &  $T=20$ \\
    			\textbf{Time step for ODE simulation:} & $h=0.1$ \\
    			\hline
    			\multicolumn{2}{|l|}{\textcolor{DarkGreen}{\textbf{\# AI Parameters:}}} \\
    			\hline
    			\textbf{Domain where data are selected:} &  $\Omega = [-2,2]^2$ \\
                    \textbf{Number of data} & \\
    			\textbf{      - Traditional method:} & $(K,N_h) = (\numprint{50000},5)$ \\
                    \textbf{      - Alternative method:} & $(K,N_h) = (\numprint{76129},5)$ \\
                    \textbf{      - Alternative method (parallel training):} & $(K,N_h) = (\numprint{105735},5)$ \\
                    \textbf{Proportion of data for training:} & $80 \%$ \\
    			\textbf{      - Traditional method:} & $(K_0,N_h) = (\numprint{40000} , 5)$   \\
                    \textbf{      - Alternative method:} & $(K_0,N_h) = (\numprint{60903} , 5)$   \\
                    \textbf{      - Alternative method (parallel training):} & $(K_0,N_h) = (\numprint{84588} , 5)$\\
    			\textbf{Number of terms in the perturbation (MLP's):} & $N_t = 3$ \\
    			\textbf{Hidden layers per MLP:} & $2$ \\
    			\textbf{Neurons on each hidden layer:} & $50$ \\
    			\textbf{Learning rate:} & $2\cdot 10^{-3}$ \\
    			\textbf{Weight decay:} & $1\cdot 10^{-9}$ \\
    			\textbf{Batch size (mini-batching for training):} & $100$ \\
    			\textbf{Epochs:} & $200$ \\
    			\textbf{Epochs between two prints of loss value:} & $20$ \\
    			\hline
    		\end{tabular}
    		}
    \end{center}

    For the alternative method with parallel training, the total computational time for training correspond to the maximum of all training times for each term of the modified field.
    
    \begin{center}
    {\tiny 
         \begin{tabular}{|c|c|c|c|}
            \hline
    	\multicolumn{4}{|c|}{\textbf{Computational time for training}} \\
    	\hline
             Method & Traditional & Alternative & Alternative (parallel training) \\
            \hline
             $f_1$ & & 2 min 54 s & 3 min 24 s \\
             $f_2$ & & 2 min 22 s & 3 min 28 s \\
             $R$ & & 13 min 23 s & 17 min 9 s \\
             \hline
             Total & \textbf{18 min 33 s} & \textbf{18 min 41 s } & \textbf{17 min 9 s} \\
             \hline
         \end{tabular}
         }
    \end{center}
	
\end{document}